\newtheorem{thm}{Theorem}[section]
\newtheorem{lem}[thm]{Lemma}
\newtheorem{cor}[thm]{Corollary}
\newtheorem{prop}[thm]{Proposition}
\theoremstyle{definition}
\newtheorem{defn}[thm]{Definition}
\newtheorem{notation}[thm]{Notation}
\newtheorem{rem}[thm]{Remark}
\begin{document}

\author[Ilya Kapovich]{Ilya Kapovich}

\address{\tt Department of Mathematics, University of Illinois at
  Urbana-Champaign, 1409 West Green Street, Urbana, IL 61801, USA
  \newline http://www.math.uiuc.edu/\~{}kapovich/} \email{\tt
  kapovich@math.uiuc.edu}

\author[Paul Schupp]{Paul Schupp}

\address{\tt Department of Mathematics, University of Illinois at
  Urbana-Champaign, 1409 West Green Street, Urbana, IL 61801, USA}
  \email{\tt schupp@math.uiuc.edu}

\thanks{Both authors were supported by the NSF grant DMS\#0404991. The
first author was also supported by the NSF grant DMS\#0603921}

\title[On group-theoretic models of randomness and genericity]{On group-theoretic
models of randomness and genericity}

\begin{abstract}
 We compare Gromov's density model of random groups
 with the Arzhantseva-Ol'shanskii model of genericity.
\end{abstract}

\subjclass[2000]{ Primary 20F69, Secondary 20F65, 20E07}

\keywords{genericity, random groups, small cancellation groups}

\maketitle


\section{Introduction}\label{intro}

The idea of genericity in geometric group theory was suggested by
Gromov and Ol'shanskii in late 1980s.  This theme has become the subject of active study in recent years.

The first mention of the idea of group-theoretic genericity seems to
have have been made in a 1986 paper of Guba~\cite{Gu}.
The first definition of genericity in the context of
finitely presented groups is due to Gromov and appeared in his seminal
1987 monograph ``Hyperbolic groups''~\cite{Grom}. There Gromov stated
that for any fixed $k\ge 2$ and $m\ge 1$ we have
\[
\lim_{\min n_i \to\infty}\frac{N_h(k,m, n_1,\dots, n_m)}{N(k,m, n_1,\dots,
  n_m)}=1,
\]
Here $N(k,m, n_1,\dots, n_m)$ is the number of all finite
presentations of the form
\[
\langle a_1,\dots, a_k| r_1,\dots, r_m\rangle\tag{$\ast$}
\]
where $r_i$ are cyclically reduced words with $|r_i|=n_i$ for $i=1,\dots, m$
and  $N_h(k,m, n_1,\dots, n_m)$ is the number of those among such
presentations that define word-hyperbolic groups.
Later Ol'shanskii~\cite{Ol92} and Champetier~\cite{Ch91,Ch95} gave rigorous proofs of this result.

The second model of genericity, which we term the
\emph{Arzhantseva-Ol'shanskii model}, suggested by Ol'shanskii in
1989 in a problem that appeared in the 11-th edition of Kourovka
Notebook~\cite{Kou90} (problem 11.75 in \cite{Kou90} contains a notion
that is very similar to, but slightly different from, the definition of
Arzhantseva-Ol'shanskii genericity used in \cite{A1,AO,KS} and in the
present paper). 

A property $\mathcal P$ of finitely presented groups is \emph{generic}
in the Arzhantseva-Olshanskii model (correspondingly,
\emph{exponentially generic} if the convergence to $1$ in the limit
below is exponentially fast) if for every $k\ge 2, m\ge 1$ we have
\[
\lim_{n\to\infty}\frac{\beta_\mathcal P(k,m, n)}{\beta(k,m, n)}=1,
\]
Here $\beta(k,m, n)$ is the number of presentations of the form $(\ast)$
where $\max_i |r_i|\le n$ and $\beta_\mathcal P(k,m, n)$ is the number
of such presentations that define a group with property $\mathcal P$.
We will give more precise definitions related to this model in Section~\ref{sect:ao}
below. The Arzhantseva-Ol'shanskii model is somewhat easier to work with than Gromov's
original model, since one can essentially disregard the situation
where some defining relators are much shorter than others.
This second model of genericity was formally introduced by Arzhantseva
and Ol'shanskii~\cite{AO} where they proved that the property of a
$k$-generated $m$-related group to have all $(k-1)$-generated
subgroups being free, is exponentially generic for every $k\ge 2, m\ge
1$. The Arzhantseva-Ol'shanskii model was subsequently used  by
Arzhantseva~\cite{A1,A2,A3,AC} and, later, by the authors of this
paper~\cite{KS,KSn,KSS,KS3}. For example,   Kapovich, Schupp and
Shpilrain~\cite{KSS} discovered a phenomenon of Mostow-type
isomorphism rigidity for generic one-relator groups using the
Arzhantseva-Ol'shanskii model.

In his book ``Asymptotic invariants of infinite groups''~\cite{Grom1},
Gromov introduced another model of genericity, that we refer to as
\emph{Gromov's density model} of random groups.
In this  model one first
\emph{fixes} a density parameter $0<d<1$. Then, given a  number of
generators $k\ge 2$ and an integer $n>>1$, from the set of all
cyclically reduced words of length $n$ in $F(a_1,\dots, a_k)$ one
chooses uniformly randomly and independently $(2k-1)^{dn}$ elements
forming a set $R$. Here $F(a_1,\dots, a_k)$ denotes the free group
with free basis $\{a_1,\dots, a_k\}$. The group
\[
G=\langle a_1,\dots, a_k|\ R \ \rangle
\]
is termed a \emph{random group with density parameter $d$} or a 
\emph{$d$-random group}. One then tries to understand the properties
of $G$ as $n\to\infty$. Note that the number of defining relators
$(2k-1)^{dn}$ grows exponentially in the length $n$ of the relators.
Also, crucially, the density parameter $d$ does not depend on the
number of generators $k$ of $G$.
Gromov's density model was further explored by
Ollivier~\cite{Olliv,Olliv1,Olliv2,Olliv3}, Zuk~\cite{Z} (who used a
"triangulated" variation of this model), Ollivier-Wise~\cite{OW,OW1}, and others.
Thus Ollivier~\cite{Olliv,Olliv1,Olliv4} gave a precise proof (with some generalizations to the
case of random quotients of word-hyperbolic groups) of a result first
outlined by Gromov that for $d<1/2$ a $d$-random group is
non-elementary torsion-free word-hyperbolic and for $d>1/2$ a $d$-random group is
finite (in fact either trivial or cyclic of order two).

In~\cite{Grom2} Gromov used yet another model of
randomness, which one might call a \emph{random graphical quotient model},
to prove  the existence of a finitely generated group that does not
admit a uniform embedding into a Hilbert space. 
Ghys~\cite{Gh} gives an exposition of the results and ideas
related to Gromov's density and graphical models of genericity.
A subsequent survey of Ollivier~\cite{Olliv4} gives a more updated
presentation of these topics.

Yet another approach to genericity involves considering the space
$\mathfrak S_k$ of \emph{marked groups} (that is,  the space of normal
subgroups $N$ in a fixed free group $F(a_1,\dots, a_k)$ or,
equivalently, the quotients $F(a_1,\dots, a_k)/N$) as a topological
space. One can then take the closure of some interesting class of
finitely presented groups (e.g. of word-hyperbolic groups) and try to
understand the algebraic properties of typical groups contained in
this closure. This approach was explored, in particular,
by Champetier~\cite{Ch00}.

  Our goal in this paper is  to clarify the relationship between
  Gromov's density model and the Arzhantseva-Olshanskii model.
While there is no direct connection between them,
it turns out that  proofs using  the Arzhantseva-Ol'shanskii
model often imply that a certain variation of Gromov's density
randomness condition holds.

For the purposes of comparison we need to introduce a variant of
Gromov's density model of randomness where the density parameter $d=d(k)$
depends on the number of generators $k$ and where it is possible that
$d(k)\to 0$ as $k\to\infty$.  We call this notion \emph{low-density
  randomness} (see Section~\ref{sect:random} for
precise definitions, including the definition of a \emph{monotone low-density random} property). We show in Theorem~\ref{thm:alg} that
many algebraic genericity results obtained in the
Arzhantseva-Ol'shanskii model do yield low-density random
properties:

\begin{thm}
  The following properties are monotone low-density random (where $k$
  varies over $k=2,3,\dots$):
  \begin{enumerate}
  \item~\cite{AO,A1} the property that a finite group
    presentation defines a group $G$ that is one-ended, torsion-free
    and word-hyperbolic (in fact, this property is monotone random in
    Gromov's density model~\cite{Grom2, Olliv1}).
  \item the property that a finite presentation on generators
    $a_1,\dots, a_k$ defines a group $G$ such that all
    $(k-1)$-generated subgroups are free and quasiconvex in $G$;
  \item the property that a finite presentation on generators
    $a_1,\dots, a_k$ defines a group $G$ with $rk(G)=k$.
  \item the property that a finite presentation on generators
    $a_1,\dots, a_k$ defines a group $G$ such that all $L_k$-generated
    subgroups of infinite index in $G$ are free and quasiconvex in $G$
    (here $L_k$ is any sequence of positive integers).
  \item \ the property that for a $k$-generated finitely presented
    group $G$ there is exactly one Nielsen-equivalence class
    of $k$-tuples of elements generating non-free subgroups.
  \end{enumerate}

\end{thm}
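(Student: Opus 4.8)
The plan is to treat each of the five properties by going back to the proof of its Arzhantseva-Ol'shanskii genericity and isolating the combinatorial condition on the presentation $(\ast)$ that really drives the argument. In each case this condition is a finite conjunction of \emph{small-cancellation-type} requirements, each of the form: no $t$ of the defining relators $r_i$ (for some fixed $t=t(k)$) exhibit a prescribed configuration in which one relator is matched along a subword of length $\ge\lambda n$, for a suitable $\lambda=\lambda(k)>0$. The point is that such a requirement speaks only of \emph{pieces} --- common subwords of the cyclically reduced relators --- and involves only boundedly many relators at once; the quantification over all finitely generated subgroups, or over all $k$-tuples in (5), takes place purely on the implication side ``combinatorial condition $\Rightarrow$ property'', which is already supplied by \cite{AO,A1,KS,KSS}. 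So the work left to us is (a) to show that each such piece condition holds for a low-density random group, and (b) to check that the conjunction of these conditions is monotone in the sense of Section~\ref{sect:random}.

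For (a), fix $k$ and a density $d>0$, let $N=\lfloor(2k-1)^{dn}\rfloor$, and choose the relator set $R$ as in the low-density model. For a fixed $t$-tuple of cyclic conjugates of relators together with a fixed matching pattern, the probability of the corresponding configuration of relative length $\ge\lambda$ is, by the standard count of cyclically reduced words of length $n$ containing a prescribed subword of length $\lceil\lambda n\rceil$, at most $\mathrm{poly}(n)\cdot(2k-1)^{-\lambda n}$. Summing over the $O\bigl(N^{t}\,n^{O(t)}\bigr)$ choices of relators and patterns, the probability that the piece condition fails is at most $(2k-1)^{(td-\lambda)n+o(n)}$, which tends to $0$ whenever $d<\lambda/t$; the standard observations that, with probability tending to $1$, the chosen relators are pairwise distinct, are not proper powers, and have only short self-overlaps are of the same type and hold on the same range of $d$ after shrinking $\lambda$ if necessary. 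Letting $d_0(k)$ be the minimum, over the finitely many piece conditions appearing in the arguments for (1)--(5), of the corresponding positive ratios $\lambda(k)/t(k)$, we conclude that for every $d<d_0(k)$ a $d$-random $k$-generator group satisfies all of these conditions with probability tending to $1$ as $n\to\infty$. For (b) it suffices to note that every such piece condition is inherited by any presentation obtained from $(\ast)$ by deleting some of the relators, so that their conjunction is monotone; together with the cited implications this is precisely monotone low-density randomness.

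It remains to match up the properties. Property (1) needs nothing new: $C'(1/6)$ together with the relators being non-proper-powers already yields torsion-freeness and word-hyperbolicity, while one-endedness of a group presented by many long random relators is Ollivier's form of Gromov's theorem \cite{Grom2,Olliv1}, already stated in the density model. Properties (2) and (4) are the Arzhantseva-Ol'shanskii subgroup theorem and Arzhantseva's refinement \cite{A1}: under a strengthened metric small-cancellation hypothesis, the Stallings graph of an infinite-index $L$-generated subgroup $H$ (with $L=k-1$ for (2) and $L=L_k$ for (4)) admits a $G$-reduction to a graph of size bounded in terms of $k$ and $L$ alone, in which no relator is readable --- whence $H$ is free --- and the boundedness of the reduction also gives that $H$ is quasiconvex; enlarging $L$ only forces $\lambda(k)$, hence $d_0(k)$, to be smaller, which is permitted in the low-density model. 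Property (3) is then formal: by (1) a $d$-random group is one-ended, hence non-free and non-trivial, so by (2) it is not $(k-1)$-generated, whereas $(\ast)$ exhibits $k$ generators; thus $\mathrm{rk}(G)=k$, and this follows from the monotone conjunction of the conditions used for (1) and (2). Finally (5) uses (4) with the constant sequence $L_k=k$: a $k$-tuple generating a non-free subgroup $H$ generates a subgroup of finite index, infinite-index ones being free by (4), and a small-cancellation argument applied to the finite-index subgroups of $G$ (as in \cite{KS,KSS}) rules out index $\ge 2$ and pins the tuple down to a Nielsen transform of $(a_1,\dots,a_k)$, so there is exactly one such Nielsen class.

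I expect the delicate point to be, in (2), (4) and (5), verifying that the $G$-reductions of the relevant Stallings graphs --- and, for (5), of the finite covers of the presentation $2$-complex --- have size bounded independently of $n$, so that on the implication side one only ever contends with $O_k(1)$ many ``controlled'' graphs rather than a family growing with $n$; this is exactly where the quasiconvexity and Dehn-reduction machinery of the Arzhantseva-Ol'shanskii method is indispensable.
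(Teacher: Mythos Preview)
Your overall strategy---isolate the combinatorial hypotheses from the cited references, verify each holds with high probability for a random $(2k-1)^{dn}$-tuple via a union bound, then invoke the implications proved in \cite{AO,A1,KS}---is sound and is in fact exactly what the paper does. But your execution differs from the paper's in one essential respect, and this is where a genuine gap appears.

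You cast \emph{all} the needed hypotheses as ``piece conditions'': configurations in which some $t$-tuple of relators shares a common subword of relative length $\ge\lambda$. That is correct for the $C'(\lambda)$ part of the Arzhantseva--Ol'shanskii package, and your estimate $(2k-1)^{(td-\lambda)n+o(n)}$ handles it (this is Ollivier's observation that $C'(\lambda)$ is monotone $d$-random for $d<\lambda/2$). However, the decisive hypothesis in \cite{AO,A1} is \emph{not} a piece condition: it is the requirement that each relator individually be non-$\mu$-readable (respectively non-$(\mu,L)$-readable), meaning it cannot be read as an immersed path in any folded labelled graph of rank $\le k-1$ (respectively $\le L$) and volume $\le \mu|r|$. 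This involves no other relator at all, and your ``matched along a subword'' schema does not cover it; nor does your probability estimate $\mathrm{poly}(n)(2k-1)^{-\lambda n}$ apply, since the fraction of $\mu$-readable words of length $n$ decays like $(2k-1)^{-(1-t_k)n}$ where $t_k$ is the genericity entropy of the non-readable set, and the paper shows (Proposition~\ref{prop:read}) that $t_k\to 1$ as $k\to\infty$. So one cannot absorb this into a piece estimate with a $k$-independent $\lambda$, and your discussion of (2), (4), (5) as following from ``a strengthened metric small-cancellation hypothesis'' is not accurate.

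The paper's route is to separate the two kinds of hypotheses cleanly. It observes that the references \cite{AO,A1,KS} establish each property as a consequence of (i) every relator lying in a fixed exponentially generic set $\mathcal P_k\subseteq\mathcal C_k$ (the non-readable or $(\mu,L)$-good words), together with (ii) the presentation satisfying $C'(\lambda)$. For (i) the paper introduces the genericity entropy $t(\mathcal P_k)<1$ and proves the one-line union bound (Proposition~\ref{prop:key}(1), Corollary~\ref{cor:key}(1)): if $d<1-t(\mathcal P_k)$ then all $(2k-1)^{dn}$ relators lie in $\mathcal P_k$ with probability $\to 1$. For (ii) one cites the known monotone randomness of $C'(\lambda)$. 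The intersection of two monotone low-density random properties is again monotone low-density random, and Theorem~\ref{thm:alg} follows. Your argument becomes correct once you replace the ``piece condition'' framework for the non-readability hypotheses by this single-word/entropy estimate; what you wrote for the $C'(\lambda)$ part, and your monotonicity-by-deletion remark, then line up with the paper's treatment.
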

Recall that for a finitely generated group $G$ the \emph{rank} of $G$,
denoted $rk(G)$, is the smallest cardinality of a generating
set for $G$.

It turns out that in many cases various properties that are generic in
the Arzhantseva-Ol'shanskii model are not $d$-random in the sense of
Gromov  with $d$ independent of $k$.   Some key information for
estimating the density parameter $d$ in Gromov's model is contained in
the \emph{genericity entropy} of exponentially generic sets of
cyclically reduced words in $F(a_1,\dots, a_k)$.  The definition of
exponential genericity for subsets of $F(a_1,\dots, a_k)$ requires
that certain fractions converge to $1$ exponentially fast as
$n\to\infty$.  Genericity entropy quantifies this convergence rate.

We observe here that, unlike the  standard small cancellation conditions,
for the Arzhantseva-Ol'shanskii "non-readability condition"  the genericity
entropy depends on the number of generators $k$ and in fact
converges to $1$ as $k\to\infty$. This implies that, when translated
into  the language of Gromov's density model, various  results using the
Arzhantseva-Ol'shanskii model  yield properties that are
low-density random but which are  NOT $d$-random for any fixed $d>0$
which is independent of $k$. We prove this fact in detail (see
Corollary~\ref{cor:read} below) for the Arzhantseva-Ol'shanskii
non-$\mu$-readability condition.
We also show (see Proposition~\ref{prop:rank} below) that the
property for a finite presentation on $k$ generators to define a
group $G$ with $rk(G)=k$ is low-density random but not $d$-random
for any $d>0$ independent of $k$. The same is true (see
Corollary~\ref{cor:magnus} below) for the analog of Magnus'
Freiheitssatz, that is, for the property that for a group $G$
defined by a finite presentation on the generators $a_1,\dots, a_k$,
any proper subset of $a_1,\dots, a_k$ freely generates a free subgroup of
$G$.

We show, however, that certain results obtained in the
Arzhantseva-Ol'shanskii genericity model do yield $d$-random
properties in Gromov's sense. Thus we prove (see Theorem~\ref{thm:ML}
below):

\begin{thm}
For any \emph{fixed} integer $L\ge 2$ there is some $d_L>0$ such
that the property that all $L$-generated subgroups of infinite index
in a finitely presented group $G$ are free is monotone $d_L$-random.
\end{thm}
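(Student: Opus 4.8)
The plan is to transplant the Arzhantseva--Ol'shanskii argument for the analogous statement in their model (namely item (4) of Theorem~\ref{thm:alg} specialized to the constant sequence $L_k\equiv L$) into Gromov's density model, the only genuine issue being that the number of defining relators is now $(2k-1)^{dn}\to\infty$ rather than a fixed $m$. Fix $L\ge 2$. I will choose $d_L>0$ depending only on $L$ and show that for every $k\ge 2$ and every $0<d\le d_L$ a $d$-random $k$-generator group $G=\langle a_1,\dots,a_k\mid R\rangle$ has, with probability $\to 1$ as $n\to\infty$, the property that every $L$-generated subgroup of infinite index in $G$ is free. First one fixes $\lambda_0=\lambda_0(L)>0$ small (of order $1/L$, as dictated by the structural lemma below) and recalls the standard fact that for $d<\lambda_0/2$ a $d$-random $R$ satisfies the small-cancellation condition $C'(\lambda_0)$ with probability $\to 1$; then $G$ is torsion-free word-hyperbolic and each $r\in R$ is a geodesic of $G$. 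The remaining, and main, task is to bound $\Pr(\mathcal B_n)$, where $\mathcal B_n$ is the event that $R$ is $C'(\lambda_0)$ yet some $L$-generated infinite-index subgroup of $G$ is non-free.

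The core is a structural lemma, to be extracted from the Arzhantseva--Ol'shanskii analysis with its constants made to depend only on $L$: there are $c=c(L)\in\mathbb N$ and $\beta=\beta(L)>0$ such that if $R$ is $C'(\lambda_0)$ and $G$ has a non-free $L$-generated infinite-index subgroup $H$, then there is a ``bad configuration'' consisting of at most $c$ relators $r_1,\dots,r_s\in R^{\pm1}$ (with $s\le c$), glued to one another and to at most $c$ geodesic arcs along a finite labelled graph of combinatorial complexity $\le c$ (edge lengths $\le n$), so that the total length of the identifications among the $r_i$ is at least $\beta n$. To build it one takes geodesic representatives $w_1,\dots,w_L$ of a generating set of $H$, forms the Stallings subgroup graph $\Gamma_0$ of $\langle w_1,\dots,w_L\rangle\le F(a_1,\dots,a_k)$, and runs the relator-completion process: repeatedly attach to the current graph any relator reading a path of length $>\lambda_0^{-1}$-fraction-of-$n$ in it, and fold. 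Since each $w_i$ is a geodesic of $G$ it contains no subword equal to more than half a relator, so whenever a relator reads a long path the bulk of that path runs along previously attached relators and their junctions; with $C'(\lambda_0)$ (relators pairwise overlap in $<\lambda_0 n$) this forces the claimed linear total overlap. Infinite index enters twice: if the completion produced no rank collapse it would terminate in a \emph{finite} graph, exhibiting $H$ as the free fundamental group of a graph, a contradiction, so a collapse (the bad configuration) occurs; and a first Betti number bound of order $\mathrm{poly}(L)$ on the relator-covered part of the completed graph lets one discard inessential relators, keeping only $c(L)$ of them. Crucially $c(L),\beta(L),\lambda_0(L)$ do not depend on $k$ — they come from Euler characteristic and rank bookkeeping for subgroup graphs, which is insensitive to the alphabet size — and this uniformity in $k$ is exactly what separates this property from the rank-$k$ property of Proposition~\ref{prop:rank}.

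Granting the lemma, the bound on $\Pr(\mathcal B_n)$ is a first-moment count. The number of combinatorial gluing patterns of complexity $\le c(L)$ with edge lengths $\le n$ is at most $g(L)\,\mathrm{poly}(n)$; for a fixed pattern the number of ways to fill its $\le c(L)$ relator slots from $R$ is at most $(2|R|)^{c(L)}=(2k-1)^{c(L)dn+O(1)}$; and for a fixed pattern and filling, the probability that the corresponding uniformly random cyclically reduced length-$n$ words actually exhibit the prescribed identifications is at most $\mathrm{poly}(n)\,(2k-1)^{-\beta(L)n}$, since forcing a random length-$n$ relator to contain a specified length-$\ell$ subword in a specified place costs $\le\mathrm{poly}(n)(2k-1)^{-\ell}$ and the identifications total $\ge\beta(L)n$. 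Multiplying, $\Pr(\mathcal B_n)\le\mathrm{poly}(n)\,(2k-1)^{(c(L)d-\beta(L))n}$, which tends to $0$ once $d<\beta(L)/c(L)$. Setting $d_L:=\min\{\lambda_0(L)/2,\ \beta(L)/(2c(L))\}$ proves the claim at density $d_L$; since this bound is non-decreasing in $d$ (lowering the density only shrinks $|R|$), the conclusion holds for all $d\le d_L$, which is the asserted monotonicity, and $d_L$ depends only on $L$.

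I expect the structural lemma to be the main obstacle — specifically the uniform-in-$k$ bound $c(L)$ on the number of relators needed to witness non-freeness, together with the efficient-gluing (linear total overlap) claim. This subtlety is entirely absent in the Arzhantseva--Ol'shanskii model, where the trivial bound ``number of relators in the bad configuration $\le m$'' suffices, but it is unavoidable here because $m=(2k-1)^{dn}\to\infty$ and a union bound over all subsets of relators of this size would be hopeless.
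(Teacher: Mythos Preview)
Your approach diverges from the paper's at the very first step, and the divergence matters. You posit a multi-relator structural lemma (at most $c(L)$ relators in a bad configuration with total overlap $\ge\beta(L)n$) and then run a first-moment count over configurations. You correctly flag this lemma as the main obstacle, and indeed it is neither proved nor easily extracted from Arzhantseva's work in the form you state. But the paper never needs it.

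The point you are missing is that Arzhantseva's analysis (recorded here as Proposition~\ref{prop:tech}) already reduces the freeness property to a condition on \emph{individual} relators: if the presentation is $C'(\lambda)$ with $\lambda\le\mu/(15L+3\mu)$ and every $r_i$ is $(\mu,L)$-good (no cyclic permutation of $r_i^{\pm1}$ contains a $(\mu,L)$-readable subword of length $\ge|r_i|/2$), then every $L$-generated infinite-index subgroup is free. So the bad event is simply ``some $r_i$ is not $(\mu,L)$-good,'' and a plain union bound over the $m=(2k-1)^{dn}$ relators---not over subsets or configurations of relators---suffices. Your concern that ``a union bound over all subsets of relators of this size would be hopeless'' is therefore a red herring; no such bound is ever attempted.

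What the paper actually does is prove a refined count (Proposition~\ref{prop:ML}) for $(\mu,L)$-readable words, yielding in Lemma~\ref{lem:ML} the entropy bound
\[
t(\mathcal Y_k)\le \frac{\frac{\mu+1}{2}\log(2k-1)+\frac{1}{2}\log(6L)}{\log(2k-1)}\xrightarrow[k\to\infty]{}\frac{\mu+1}{2}<1.
\]
The novelty is that the dominant exponential factor in the count of readable words is $(6L)^n$, independent of $k$, rather than a power of $(2k-1)$; this is what makes the entropy bound uniform for large $k$. Hence there is $k_0$ and $\nu<1$ with $t(\mathcal Y_k)\le\nu$ for all $k\ge k_0$, and Proposition~\ref{prop:key} gives monotone $d$-randomness for $d<1-\nu$ for those $k$. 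For the finitely many $k<k_0$ the paper simply invokes the already-established low-density result (Theorem~\ref{thm:alg}(4)) to obtain densities $d(2),\dots,d(k_0-1)$ and sets $d_0=\min\{d(2),\dots,d(k_0-1),1-\nu\}$.

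In short: the paper's route is a single-word entropy estimate feeding into Proposition~\ref{prop:key}, split into large $k$ (new estimate) and small $k$ (existing low-density result). Your proposed multi-relator structural lemma is both unproved and unnecessary.
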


We also apply our results to a question of estimating from below the
number of isomorphism types
of quotients of $F(a_1,a_2,\dots, a_k)$ where the number of relators
is arbitrary and their length is bounded above by $n$. To be more
precise, let $k\ge 2$ be fixed and let $I_k(n)$ be the number of isomorphism types of groups
given by presentations of the form
\[
\langle a_1,\dots, a_k| R\rangle
\]
where $R$ is a subset of the $n$-ball in $F(a_1,\dots, a_k)$. Note
that the size of the $n$-ball in $F(a_1,\dots, a_k)$ is $\le 
(2k-1)^n$. Hence the number of all subsets of this ball is $\le
2^{(2k-1)^n}$ yielding a double-exponential upper bound on $I_k(n)$
as $n\to\infty$. It is natural to ask if there is also a
double-exponential lower bound for $I_k(n)$. This question was
suggested to the authors by Gromov, who informed us that several years
ago Anna Erschler obtained an unpublished proof giving such a
double-exponential lower bound.
In this paper, relying on the isomorphism rigidity results for
generic quotients of the modular group that we obtained in \cite{KS3}, we 
obtain a double-exponential lower bound for $I_k(n)$. Our proof is
quite different from that of Erschler who used central extensions of
word-hyperbolic groups to estimate $I_k(n)$ from below.

Let $M=\langle a,b| a^2=b^3=1\rangle$, so that $M$ is isomorphic to
the \emph{modular group} $ PSL(2,\mathbb Z)$. We consider
finitely presented quotients of $M$ where the defining relations are
words in the alphabet $A=\{a,b,b^{-1}\}$. There are natural notions of
a reduced and a cyclically reduced word in $A^\ast$ in this setting
(see Section~\ref{sect:mod} below for details). Note that in this context every
cyclically reduced word is either a single letter or has even length
(again, see Section~\ref{sect:mod} below).
Let $\epsilon>0$ be
fixed. For an integer $t\ge 1$ let $J_\epsilon(t)$ be the number of
isomorphism types of groups given by presentations of the form
\[
G=M/\langle\langle r_1,\dots, r_m\rangle\rangle 
\]
where $m=2^{t\epsilon}$ and where each $r_i$ is a
cyclically reduced word of length $2t$ in $A^\ast$.
We prove:

\begin{thm}\label{thm:ge}
There exists $\epsilon_0>0$ such that for any $0<\epsilon\le
\epsilon_0$ there is some $\rho>1$ such that
\[
J_\epsilon(t)\ge \rho^{\rho^t} \quad \text{ for } t\to\infty,
\]
that is, the number $J_\epsilon(t)$ is bounded below
by a double-exponential function of $t$ as $t\to\infty$.
\end{thm}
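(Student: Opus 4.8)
The plan is to reduce the lower bound on $J_\epsilon(t)$ to a counting argument combined with the isomorphism rigidity results for generic quotients of the modular group $M$ established in \cite{KS3}. The key point is that rigidity turns the problem of counting isomorphism types into the much easier problem of counting orbits of tuples of relators under a small ``obvious'' equivalence. Concretely, $M=\langle a,b\mid a^2=b^3=1\rangle$ has a finite outer automorphism group and finitely many automorphisms at the level of the relevant presentations, so two presentations $M/\langle\langle r_1,\dots,r_m\rangle\rangle$ and $M/\langle\langle r_1',\dots,r_m'\rangle\rangle$ with generic relator tuples are isomorphic (as groups) only if the tuples $(r_1,\dots,r_m)$ and $(r_1',\dots,r_m')$ differ by one of these boundedly many symmetries together with permuting, cyclically permuting, and inverting the individual relators.

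First I would recall from \cite{KS3} the precise statement: for a suitable exponentially generic class $\mathcal{U}$ of $m$-tuples of cyclically reduced words of length $2t$ in $A^*$ (with $m=2^{t\epsilon}$ and $\epsilon$ small), any group $G=M/\langle\langle r_1,\dots,r_m\rangle\rangle$ with $(r_1,\dots,r_m)\in\mathcal{U}$ is rigid in the above sense, so that the equivalence classes that can collapse to a single isomorphism type have size at most $C^m$ for some constant $C=C(\epsilon)$ independent of $t$ (the factor $m!$ from permutations, $(2t)^m$ from cyclic rotations, $2^m$ from inversions, and $O(1)$ from $\mathrm{Out}(M)$, all of which are bounded by $C^m$ after possibly enlarging $C$, since $m!\le m^m = 2^{\epsilon t\log_2 m}\le 2^{(\epsilon t)^2}$ which is absorbed once we also count relators). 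Second, I would count the total number of $m$-tuples in $\mathcal{U}$: since $\mathcal{U}$ is exponentially generic among all $m$-tuples of cyclically reduced length-$2t$ words, and the number of cyclically reduced words of length $2t$ in $A^*$ grows like $\lambda^{2t}$ for some $\lambda>1$ (this is where the structure of cyclically reduced words in the modular group, described in Section~\ref{sect:mod}, is used), the number of such tuples is at least $\tfrac12\lambda^{2tm}=\tfrac12\lambda^{2t\cdot 2^{t\epsilon}}$. Third, dividing the number of tuples by the bound $C^m$ on the equivalence class size gives
\[
J_\epsilon(t)\ \ge\ \frac{\tfrac12\,\lambda^{2tm}}{C^m}\ =\ \frac12\left(\frac{\lambda^{2t}}{C}\right)^{2^{t\epsilon}}.
\]
For $t$ large enough, $\lambda^{2t}/C\ge \lambda^{t}$, so $J_\epsilon(t)\ge \tfrac12\lambda^{t\cdot 2^{t\epsilon}}$, and choosing any $1<\rho<\lambda^{\epsil}$ — more carefully, noting $t\cdot 2^{t\epsilon}\ge (2^{\epsilon})^{t}$ for $t$ large, i.e. the exponent itself grows at least like $\rho_0^t$ with $\rho_0=2^{\epsilon}>1$ — we obtain $J_\epsilon(t)\ge \rho^{\rho^t}$ for a suitable $\rho>1$ once $t\to\infty$. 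The threshold $\epsilon_0$ is dictated by the range of $\epsilon$ for which the rigidity results of \cite{KS3} apply with $m=2^{t\epsilon}$ relators.

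The main obstacle I expect is verifying that the rigidity theorem from \cite{KS3} genuinely applies in the regime $m=2^{t\epsilon}$ — that is, with exponentially many relators rather than a fixed number — and extracting from it a clean bound of the form $C^m$ on how many length-$2t$ relator tuples can yield isomorphic groups. The subtle points are: (i) that generic $m$-tuples still generate normal subgroups whose quotient is rigid when $m$ is this large, which forces $\epsilon$ to be below the relevant small-cancellation-type density threshold for quotients of $M$; (ii) that the only coincidences among the groups $M/\langle\langle\bar r\rangle\rangle$ come from the finite symmetry group of the presentation acting on tuples, so that no ``exotic'' isomorphisms reduce the count; and (iii) bookkeeping the various combinatorial factors ($m!$, rotations, inversions, $\mathrm{Out}(M)$) so that their product stays single-exponential in $m$ and is therefore swamped by the $\lambda^{2tm}$ count of tuples. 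Once these are in place, the arithmetic above is routine.
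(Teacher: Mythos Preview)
Your approach is essentially the paper's: invoke the isomorphism rigidity of \cite{KS3}, count $m$-tuples of cyclically reduced words of length $2t$, and divide by the size of the ``obvious'' equivalence class (permutations, cyclic shifts, inversions, $\mathrm{Out}(M)$). Two points need correction.

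First, the claim that an equivalence class has size at most $C^m$ with $C=C(\epsilon)$ \emph{independent of $t$} is false as stated. The factor $m!$ from permutations satisfies $m!\le m^m=(2^{t\epsilon})^m$, and the factor from cyclic rotations is $(2t)^m$; neither is bounded by $C^m$ with a fixed base. The paper keeps these honest, writing the multiplicity as $2\cdot m!\cdot(4t)^m$ and then bounding it by $(16mt)^m$. Your displayed inequality should therefore read
\[
J_\epsilon(t)\ \ge\ \frac{c\,(2^{t+1})^m}{2\,m!\,(4t)^m}\ \ge\ \frac{c\,(2^{t+1})^m}{(16mt)^m}\ =\ c\left(\frac{2^{t+1}}{16\cdot 2^{t\epsilon}\cdot t}\right)^{2^{t\epsilon}},
\]
and one then checks that for $\epsilon$ small the base exceeds $2$ for large $t$, giving $\log J_\epsilon(t)\ge 2^{t\epsilon}\log 2$. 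Your conclusion survives this fix, but the intermediate bookkeeping does not work with a constant $C$; your own parenthetical (``absorbed once we also count relators'') shows you sensed this, yet the display reverts to $C^m$.

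Second, regarding obstacle (i)---extending the rigidity from \cite{KS3} from fixed $m$ to $m=2^{t\epsilon}$---the paper does not treat this as a black box to be hoped for but uses precisely the machinery developed earlier: the exponentially generic set $S\subseteq\mathcal C_A$ from \cite{KS3} has genericity entropy strictly less than $1$, and an adaptation of Proposition~\ref{prop:key}/Corollary~\ref{cor:key} to the modular-group alphabet then guarantees that $S^m$ (and hence the conditions (1)--(3) needed for rigidity) remains generic among $m$-tuples when $m=2^{t\epsilon}$ with $\epsilon$ below the entropy gap. You should invoke that mechanism explicitly rather than leaving it as a ``subtle point'', since it is exactly what fixes $\epsilon_0$.
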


Since $M$ is generated by two elements $a$ and $b$,
Theorem~\ref{thm:ge} immediately yields a double-exponential Erschler
lower bound:

\begin{cor}\label{cor:er}
The function $I_2(n)$ has a double-exponential lower bound.
(and hence the same is true for $I_k(n)$ for any fixed $k\ge 2$).
\end{cor}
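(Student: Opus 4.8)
The plan is to deduce Corollary~\ref{cor:er} directly from Theorem~\ref{thm:ge} by observing that the quotients of $M$ appearing in the count $J_\epsilon(t)$ are themselves quotients of $F(a,b)$ by subsets of a ball of controlled radius. First I would fix $\epsilon$ in the range $(0,\epsilon_0]$ supplied by Theorem~\ref{thm:ge}, so that $J_\epsilon(t)\ge \rho^{\rho^t}$ for some $\rho>1$ as $t\to\infty$. Each group counted by $J_\epsilon(t)$ has the form $G=M/\langle\langle r_1,\dots,r_m\rangle\rangle$ with $m=2^{t\epsilon}$ and each $r_i$ a cyclically reduced word of length $2t$ in $A^\ast=\{a,b,b^{-1}\}^\ast$. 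Since $M=\langle a,b\mid a^2,b^3\rangle$, we have $G=\langle a,b\mid a^2,\,b^3,\,r_1,\dots,r_m\rangle$, which is a presentation of the form $\langle a,b\mid R\rangle$ with $R=\{a^2,b^3,r_1,\dots,r_m\}$ a subset of $F(a,b)$. Every element of $R$ has length at most $\max\{3,2t\}=2t$ for $t\ge 2$, so $R$ is contained in the $n$-ball of $F(a,b)$ with $n=2t$. Hence each such $G$ is among the groups counted by $I_2(2t)$, giving $I_2(2t)\ge J_\epsilon(t)\ge \rho^{\rho^t}$ for all sufficiently large $t$.

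It then remains only to convert this into a double-exponential lower bound in the variable $n$. Setting $n=2t$ (and handling odd $n$ by monotonicity, since $I_2$ is non-decreasing in $n$, as a subset of the $n$-ball is a subset of the $(n+1)$-ball), we get $I_2(n)\ge \rho^{\rho^{n/2}}=\rho^{(\rho^{1/2})^n}$ for large $n$. Putting $\rho_0=\rho^{1/2}>1$ and absorbing constants, this is exactly a double-exponential lower bound of the form $I_2(n)\ge \sigma^{\sigma^n}$ for some $\sigma>1$ and all large $n$. Finally, the parenthetical assertion for general $k\ge 2$ follows because any presentation $\langle a_1,a_2\mid R\rangle$ with $R$ in the $n$-ball of $F(a_1,a_2)$ can be viewed (after adding relators $a_3,\dots,a_k$, which have length $1\le n$) as a presentation $\langle a_1,\dots,a_k\mid R'\rangle$ with $R'$ in the $n$-ball of $F(a_1,\dots,a_k)$, defining an isomorphic group; distinct isomorphism types stay distinct, so $I_k(n)\ge I_2(n)$.

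The only genuine content here is Theorem~\ref{thm:ge} itself; the corollary is a bookkeeping argument. The one point requiring a small amount of care — and the closest thing to an obstacle — is making sure the relators $a^2$ and $b^3$ do not disturb the counting: we are enlarging the relator set by the two fixed words $a^2,b^3$, and we must confirm both that this keeps $R$ inside the $2t$-ball (immediate, since $t\ge 2$) and that the resulting group is genuinely the quotient of $M$ we intended (immediate from the definition of $M$ by generators and relations). Since $I_k(n)$ counts \emph{isomorphism types}, no injectivity of the parametrization is needed: we simply have a surjection from the $J_\epsilon(t)$-indexed family of isomorphism classes onto a subfamily of the $I_k(n)$-classes, which suffices for the lower bound.
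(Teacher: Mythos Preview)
Your proof is correct and follows exactly the route the paper indicates: the paper simply states that since $M$ is generated by two elements, Theorem~\ref{thm:ge} ``immediately yields'' the double-exponential lower bound for $I_2(n)$, and your argument is precisely the spelling-out of this immediate deduction (rewriting quotients of $M$ as quotients of $F(a,b)$ with relators $a^2,b^3,r_1,\dots,r_m$ in the $2t$-ball, then reindexing and using monotonicity). The additional remarks about $I_k(n)\ge I_2(n)$ via adding the relators $a_3,\dots,a_k$ are also exactly what the parenthetical assertion intends.
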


We are grateful to Lior Silberman for helpful comments regarding the
behavior of Kazhdan's Property (T) with respect to Gromov's density model.
We also thank Goulnara Arzhantseva for many helpful remarks and suggestions.

\section{Gromov's density model and low-density random groups}\label{sect:random}

In this section we want to give   some precise definitions and
notation related to Gromov's density model.

\begin{notation}
  For $k\ge 2$ let $\mathcal C_k\subseteq F(a_1,\dots, a_k)$ be the
  set of all cyclically reduced words in $F(a_1,\dots,a_k)$.  If
  $\mathcal P_k\subseteq \mathcal C_k$, we denote $\overline{\mathcal
    P_k}:=\mathcal C_k-\mathcal P_k$.  For a subset $\mathcal
  Q_k\subseteq F(a_1,\dots, a_k)$ denote by $\gamma(n,\mathcal Q_k)$ the
  number of elements of length $n$ in $\mathcal Q_k$.
\end{notation}

\begin{defn}[Random groups in the density model]\label{defn:random}
  Let $\mathcal G$ be a property of finite presentations of   groups. Let
  $0<d<1$.

  We say that the property $\mathcal G$ is \emph{random with density
    parameter $d$ ({\rm or} $d$-random)} if for every $k\ge 2$
  \[
\lim_{n\to\infty}\frac{R_k(n,d,\mathcal G)}{\gamma(n,\mathcal
  C_k)^{m_n}}=1,
\]
where $m_n=(2k-1)^{dn}$ and $R_k(n,d,\mathcal G)$ is the number of all
$m_n$-tuples $(r_1,\dots, r_{m_n})$ of cyclically reduced words of
length $n$ such that the group with presentation
\[
\langle a_1,\dots, a_k | r_1,\dots, r_{m_n}\rangle
\]
has property $\mathcal G$.

We say that $\mathcal G$ is \emph{monotone $d$-random} if for every
$0<d'\le d$ the property $\mathcal G$ is $d'$-random. A property is
\emph{monotone random} of it is monotone $d$-random for some $d>0$.
\end{defn}

Note that $\gamma(n,\mathcal C_k)^{m_n}$ is exactly the number of all
presentations
\[
\langle a_1,\dots, a_k | r_1,\dots, r_{m_n}\rangle
\]
where the $r_i$ are cyclically reduced words of length $n$.

\begin{defn}[Low-density random groups]\label{defn:lowd}
  We can consider a property $\mathcal G$ of finite presentations as
  $\mathcal G=(\mathcal G_k)_{k\ge 2}$ where for every $k\ge 2$
  $\mathcal G_k$ is a property of finite group presentations on $k$
  generators $a_1,\dots, a_k$.

  For every integer $k\ge 2$ let $0<d(k)<1$.  We say that $\mathcal G$
  is \emph{low-density random with density sequence $\left(d(k)\right)_{k\ge 2}$}
  if for every integer $k\ge 2$ we have
  \[
\lim_{n\to\infty}\frac{R_k(n,d(k),\mathcal G_k)}{\gamma(n,\mathcal
  C_k)^{m_n}}=1,
\]
were $m_n=(2k-1)^{nd(k)}$ and $R_k(n,d(k),\mathcal G_k)$ is the number
of all $m_n$-tuples $(r_1,\dots, r_{m_n})$ of cyclically reduced words
of length $n$ such that the group
\[
\langle a_1,\dots, a_k | r_1,\dots, r_{m_n}\rangle
\]
has property $\mathcal G_k$.

We say that $\mathcal G$ is \emph{monotone low-density random with
  density sequence $\left(d(k)\right)_{k\ge 2}$} if for any sequence $\left(d'(k)\right)_{k\ge 2}$ satisfying
$0<d'(k)\le d(k)$ the property $\mathcal G$ is low-density random
with density sequence $\left(d'(k)\right)_{k\ge 2}$.
\end{defn}

\begin{rem}\label{rem:0}
  In the above definition let $d:=\inf_k d(k)$ and let $\mathcal G$ be
  monotone low-density random with density sequence $\left(d(k)\right)_{k\ge 2}$. If $d>0$ then
  $\mathcal G$ is monotone $d$-random in the sense of Definition~\ref{defn:random}.

  The situation where $d=0$ does not, however,  correspond to
  a special case of Definition~\ref{defn:random}.
\end{rem}

Note that if $\mathcal G$ is a monotone low-density random property
and $\mathcal G'$ is a monotone random property with a density
parameter $d>0$ independent of $k$ then $\mathcal G\cap\mathcal G'$ is
again monotone low-density random. Moreover, the intersection of two
monotone low-density random properties is also monotone low-density
random.

In this paper we concentrate on monotone random and monotone
low-density random properties.  There are, however,  important examples
of non-monotone random properties. Thus it follows from the result of
Zuk~\cite{Z} that Kazhdan's Property (T) is $d$-random for every
$1/3<d<1/2$ (Zuk uses a somewhat different density model in
his paper but his results imply the above statement in Gromov's
density model). On the other hand, Ollivier and Wise~\cite{OW} proved that if
$0<d<1/5$ and $G$ is a $d$-random group then $G$ does not have
Property (T).

\section{The Arzhantseva-Ol'shanskii genericity model}\label{sect:ao}

We recall the basic notion of genericity in the
Arzhantseva-Ol'shanskii approach.
\begin{defn}[Generic subsets]
Let $k\ge 2$ be an integer.  A subset $\mathcal P_k\subseteq \mathcal C_k$ is \emph{generic} if
\[
\lim_{n\to\infty} \frac{\gamma(n,\mathcal P_k)}{\gamma(n,\mathcal
  C_k)}=1.
\]
We say that $\mathcal P_k\subseteq \mathcal C_k$ is
\emph{exponentially generic} if it is generic and, in addition,
the convergence to $1$ in the above limit is exponentially fast, that is, there exist $a>0$
and $0<\sigma<1$ such that for all $n\ge 1$
\[
\frac{\gamma(n,\overline{\mathcal P_k})}{\gamma(n,\mathcal C_k)}\le a
\sigma^n.
\]
This condition is equivalent to the fact that for some $0<t<1$ and
some $c>0$ we have:
\[
\gamma(n,\overline{\mathcal P_k})\le c(2k-1)^{tn}, \text{ for all }
n\ge 1.\tag{$\dag$}
\]
\end{defn}

It is not hard to show~\cite{KMSS} that a subset $\mathcal
P_k\subseteq \mathcal C_k$ is exponentially generic if and only if
\[
\lim_{n\to\infty} \frac{\#\{w\in \mathcal P_k: |w|\le n\}}{\#\{w\in\mathcal
  C_k: |w|\le n\}}=1.
\]
with exponentially fast convergence.

\begin{defn}
Let $k\ge 2$ and $m\ge 1$ be integers. We say that a subset
$\mathcal U_{k,m}\subseteq \mathcal C_k^m$ is \emph{generic} if
\[
\lim_{n\to\infty}\frac{\#\{(r_1,\dots,r_m)\in \mathcal U_{k,m}:
  |r_i|\le n, i=1,\dots, m\}}{\#\{(r_1,\dots,r_m)\in \mathcal C_k^m:
  |r_i|\le n, i=1,\dots, m\}}=1.
\]
If, in addition, this convergence is exponentially fast, we say that
$\mathcal U_{k,m}\subseteq \mathcal C_k^m$ is \emph{exponentially generic}.
\end{defn}

It is easy to see that if $\mathcal P_k\subseteq \mathcal C_k$ is
exponentially generic in $\mathcal C_k$, then for every $m\ge 1$ the
subset $\mathcal P_k^m\subseteq \mathcal C_k^m$ is exponentially
generic in $\mathcal C_k^m$. Moreover, it is also not hard to show
that in this case for every $m\ge 1$

\[
\lim_{n\to\infty}\frac{\#\{(r_1,\dots,r_m)\in \mathcal {\mathcal P}_k^m:
  |r_i|=n, i=1,\dots, m\}}{\#\{(r_1,\dots,r_m)\in \mathcal C_k^m:
  |r_i|=n, i=1,\dots, m\}}=1,
\]
with exponentially fast convergence.

\begin{defn}[Arzhantseva-Ol'shanskii genericity]
Let $\mathcal P$ be a property of groups.

For integers $k\ge 2, m\ge 1$ we say that a property of groups
$\mathcal P$ is \emph{(exponentially) $(k,m)$-generic} if the set
$\mathcal U_{k,m}$ of all $m$-tuples $(r_1,\dots, r_m)\in \mathcal C_k^m$ such
that the group $\langle a_1,\dots, a_k| r_1,\dots, r_m\rangle$
has  property $\mathcal P$, is an
(exponentially) generic subset of  $\mathcal C_k^m$. 

We say that
$\mathcal P$ is \emph{(exponentially) generic} if it is
(exponentially) $(k,m)$-generic for every $k\ge 2$, $m\ge 1$.
\end{defn}

\section{Genericity entropy and low-density randomness}

\begin{defn}
  Let $\mathcal P_k\subseteq \mathcal C_k$ be a set of cyclically
  reduced words. We define the \emph{genericity entropy} $t=t(\mathcal
  P_k)$ of $\mathcal P_k$ as:
\[
t:=\limsup_{n\to\infty} \frac{\log \gamma(n, \overline{\mathcal
    P_k})}{n\log (2k-1)}.
\]

We also define the \emph{lower genericity entropy} $t'=t'(\mathcal
P_k)$ as
\[
t':=\liminf_{n\to\infty} \frac{\log \gamma(n, \overline{\mathcal
    P_k})}{n\log (2k-1)}.
\]
\end{defn}

It is easy to see that we always have $0\le t'(\mathcal P_k)\le
t(\mathcal P_k)\le 1$ and that $\mathcal P_k\subseteq \mathcal C_k$ is
exponentially generic if and only if $t(\mathcal P_k)<1$.

A simple but crucial computation shows that  genericity entropy
controls the density parameter in Gromov's  model of random
groups:

\begin{prop}\label{prop:key}
  Let $k\ge 2$ and let $\mathcal P_k\subseteq \mathcal C_k$.

\begin{enumerate}
\item Suppose that $t:=t(\mathcal P_k)<1$.  Let $0<d<1$ be such that
  $d<1-t$.  Then:

\[
\lim_{n\to \infty} \frac{\# \text{ $(2k-1)^{dn}$-tuples of elements of
    $\mathcal P_k$ of length $n$}}{\# \text{ $(2k-1)^{dn}$-tuples of
    elements of $\mathcal C_k$ of length $n$}}=1.
\]

\item Suppose that $d>1-t'$ where $t'=t'(\mathcal P_k)$.

  Then
\[
\lim_{n\to \infty} \frac{\# \text{ $(2k-1)^{dn}$-tuples of elements of
    $\mathcal P_k$ of length $n$}}{\# \text{ $(2k-1)^{dn}$-tuples of
    elements of $\mathcal C_k$ of length $n$}}=0.
\]
\end{enumerate}

\end{prop}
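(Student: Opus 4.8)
The plan is to count tuples directly and reduce everything to estimates on $\gamma(n,\overline{\mathcal P_k})$ versus $\gamma(n,\mathcal C_k)$. Write $N:=\gamma(n,\mathcal C_k)$, $B:=\gamma(n,\overline{\mathcal P_k})$, $G:=\gamma(n,\mathcal P_k)=N-B$, and $m=m_n=(2k-1)^{dn}$. Since the $m_n$-tuples are ordered and chosen independently from the length-$n$ sphere, the number of tuples all of whose entries lie in $\mathcal P_k$ is $G^{m}$ and the total number of tuples is $N^{m}$, so the ratio in question is exactly $(G/N)^{m}=(1-B/N)^{m}$. Thus both parts amount to understanding the behavior of $(1-B/N)^{m}$ as $n\to\infty$. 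The standard and well-known asymptotic $N=\gamma(n,\mathcal C_k)\sim c_k (2k-1)^n$ (the sphere-size count in a free group, up to a multiplicative constant depending on $k$) will be used freely; all that really matters is $N\asymp (2k-1)^n$.

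For part (1), fix $t=t(\mathcal P_k)<1$ and $0<d<1-t$. Pick $t<s<1-d$, which is possible since $d<1-t$. By definition of the $\limsup$, for all large $n$ we have $B=\gamma(n,\overline{\mathcal P_k})\le (2k-1)^{sn}$, hence $B/N\le C (2k-1)^{(s-1)n}$ for a constant $C=C(k)$. Therefore
\[
0\le 1-(1-B/N)^{m}\le m\cdot\frac{B}{N}\le C\,(2k-1)^{dn}\,(2k-1)^{(s-1)n}=C\,(2k-1)^{(d+s-1)n},
\]
using the elementary inequality $1-(1-x)^{m}\le mx$ for $x\in[0,1]$. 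Since $d+s-1<0$ by the choice of $s$, the right-hand side tends to $0$, so $(1-B/N)^{m}\to 1$, which is the claim.

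For part (2), fix $d>1-t'$ where $t'=t'(\mathcal P_k)$. Choose $s$ with $1-d<s<t'$. By definition of the $\liminf$, for all large $n$ we have $B=\gamma(n,\overline{\mathcal P_k})\ge (2k-1)^{sn}$, hence $B/N\ge c (2k-1)^{(s-1)n}$ for a constant $c=c(k)>0$; note $B/N\le 1$. Using $1-x\le e^{-x}$, we get
\[
(1-B/N)^{m}\le \exp\!\left(-m\,\frac{B}{N}\right)\le \exp\!\left(-c\,(2k-1)^{dn}(2k-1)^{(s-1)n}\right)=\exp\!\left(-c\,(2k-1)^{(d+s-1)n}\right).
\]
Since $d+s-1>0$, the exponent $\to-\infty$, so the ratio tends to $0$, as required. (One should also note the degenerate case where $\overline{\mathcal P_k}$ is eventually all of $\mathcal C_k$ on a subsequence, i.e. $B/N\to 1$ along that subsequence; but then $t'=1$ and the hypothesis $d>1-t'=0$ is automatic, and the bound above still applies once $B/N$ is bounded below by a positive constant on the relevant subsequence, forcing the $\liminf$ of the ratio to be $0$ — and in this proposition it suffices to get convergence to $0$ along the full sequence, which the displayed exponential bound delivers on every large $n$.)

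The only genuinely delicate point is bookkeeping the direction of the inequalities: part (1) needs an \emph{upper} bound on $\gamma(n,\overline{\mathcal P_k})$ valid for \emph{all} large $n$ (which the $\limsup$ provides), while part (2) needs a \emph{lower} bound valid for \emph{all} large $n$, and the $\liminf$ provides that — one must insert the auxiliary exponent $s$ strictly between $1-d$ and $t'$ precisely so that the definition of $\liminf$ can be invoked for all sufficiently large $n$ rather than merely along a subsequence. No other obstacle arises; the rest is the two elementary estimates $1-(1-x)^m\le mx$ and $1-x\le e^{-x}$ together with $N\asymp(2k-1)^n$.
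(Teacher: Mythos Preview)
Your proof is correct and follows essentially the same approach as the paper: both arguments reduce to controlling $(1-B/N)^m$ via the union bound $1-(1-x)^m\le mx$ for part~(1), and for part~(2) both take logarithms and use that $m\cdot B/N\to\infty$. The only cosmetic difference is that in part~(2) you use the clean inequality $1-x\le e^{-x}$, whereas the paper invokes the asymptotic $\log(1-x)\sim -x$; your version is arguably tidier since it avoids justifying the asymptotic, but the substance is identical.
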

\begin{proof}

  (1) Recall that there exist $0<c_0<c_1<\infty$ such that for every
  $n\ge 1$ we have
\[
c_0(2k-1)^n\le \gamma(n,\mathcal C_k)\le c_1(2k-1)^n.
\]

  Indeed, a result of Rivin ~\cite{Riv} shows that

\[  \gamma(n,\mathcal C_k) = (2k -1)^n + 1 + (k-1)[ 1 + (-1)^n] \]

  Thus for a fixed $k \ge 2$ we have $ \gamma(n,\mathcal C_k) \sim (2k - 1)^n$
where $f(n)\sim g(n)$ means that $\lim_{n\to\infty}
\frac{f(n)}{g(n)}=1$.

Let $m=(2k-1)^{dn}$. The number $N$ of $m$-tuples of elements of
$\mathcal C_k$ of length $n$ where at least one element does not belong
to $\mathcal P_k$ satisfies
\begin{gather*}
  \frac{N}{\gamma(n,\mathcal C_k)^m}\le \frac{m
    \gamma(n,\overline{\mathcal P_k})\gamma(n, \mathcal
    C_k)^{m-1}}{\gamma(n,\mathcal C_k)^m}=\frac{m \gamma(n,
    \overline{\mathcal P_k})}{\gamma(n,\mathcal C_k)}\le\\
  \le \frac{(2k-1)^{dn} c (2k-1)^{tn}}{c_0
    (2k-1)^n}=\frac{c(2k-1)^{(t+d)n}}{c_0(2k-1)^n}\to_{n\to\infty} 0.
\end{gather*}
This implies part (1) of the proposition.

(2) Again let $m=(2k-1)^{nd}$. Recall that $d>1-t'$, so that $t'>1-d$.
Let $t''$ be such that $t'>t''>1-d$. Then for $n>>1$ we have
\[
\gamma(n,\overline{\mathcal P_k})\ge (2k-1)^{nt''}
\]
and hence
\[
\gamma(n, \mathcal P_k)=\gamma(n,\mathcal C_k)-\gamma(n,
\overline{\mathcal P_k})\le \gamma(n,\mathcal C_k)-(2k-1)^{nt''}.
\]

Thus $\gamma(n,\mathcal P_k)^m$ is the number of $m$-tuples of
elements of $\mathcal P_k$ of length $n$ and it satisfies:

\begin{gather*}
  \frac{\gamma(n,\mathcal P_k)^m}{\gamma(n,\mathcal C_k)^m}\le
  \frac{(\gamma(n,\mathcal
    C_k)-(2k-1)^{nt''})^m}{\gamma(n,\mathcal C_k)^m}=\\
  \left( \frac{\gamma(n,\mathcal C_k)-(2k-1)^{nt''}}{\gamma(n,\mathcal
      C_k)}\right)^m=\left( 1-\frac{(2k-1)^{nt''}}{\gamma(n,\mathcal
      C_k)}\right)^m
\end{gather*}

Denote $Y_n=\log \frac{\gamma(n,\mathcal P_k)^m}{\gamma(n,\mathcal
  C_k)^m}$. Then

\begin{gather*}
  Y_n\le m \log \left( 1-\frac{(2k-1)^{nt''}}{\gamma(n,\mathcal
      C_k)}\right)=(2k-1)^{nd}\log \left(
    1-\frac{(2k-1)^{nt''}}{\gamma(n,\mathcal C_k)} \right)\sim\\
  (2k-1)^{nd} \left( -\frac{(2k-1)^{nt''}}{\gamma(n,\mathcal
      C_k)}\right)\sim
  -(2k-1)^{nd}\frac{(2k-1)^{nt''}}{(2k-1)^n}=\\
   =-\frac{(2k-1)^{n(d +t'')}}{(2k-1)^n} =- \left(\frac{(2k-1)^{d+t''}}{2k-1}\right)^n\to_{n\to\infty}
  -\infty,
\end{gather*}

since $d+t''>1$. (Recall that $f(n)\sim g(n)$ means that $\lim_{n\to\infty}
\frac{f(n)}{g(n)}=1$.)

Hence $\lim_{n\to\infty} \log \frac{\gamma(n,\mathcal
P_k)^m}{\gamma(n,\mathcal
  C_k)^m}=-\infty$ and therefore $\lim_{n\to\infty}
\frac{\gamma(n,\mathcal P_k)^m}{\gamma(n,\mathcal C_k)^m}=0$, as
claimed.

\end{proof}

\begin{cor}\label{cor:key}
For each $k\ge 2$ let $\mathcal P_k\subseteq \mathcal C_k$.
Let $\mathcal G =(\mathcal G_k)_{k\ge 2}$ where $\mathcal G_k$ is the
property that for a finite presentation on $k$ generators all the
defining relations belong to $\mathcal P_k$. Let $t_k=t(\mathcal P_k)$
and let $t_k'=t'(\mathcal P_k)$. Then the following hold:

\begin{enumerate}
\item If $0\le t_k<1$ for every $k\ge 2$ then the property $\mathcal
  G$ is monotone low-density random.

\item If $\sup_k t_k'=1$ then there does not exist $d>0$ such that
  $\mathcal G$ is $d$-random.
\end{enumerate}
\end{cor}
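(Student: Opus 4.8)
The plan is to deduce Corollary~\ref{cor:key} directly from Proposition~\ref{prop:key}, together with the definitions of (monotone) low-density randomness. For part~(1), fix an arbitrary $k\ge 2$. By hypothesis $t_k=t(\mathcal P_k)<1$, so we may pick any $d(k)$ with $0<d(k)<1-t_k$; part~(1) of Proposition~\ref{prop:key} then says precisely that the fraction of $(2k-1)^{d(k)n}$-tuples of length-$n$ elements of $\mathcal C_k$ all of whose coordinates lie in $\mathcal P_k$ tends to $1$ as $n\to\infty$. Since an $m_n$-tuple $(r_1,\dots,r_{m_n})$ gives a presentation with property $\mathcal G_k$ exactly when all $r_i\in\mathcal P_k$, this says $R_k(n,d(k),\mathcal G_k)/\gamma(n,\mathcal C_k)^{m_n}\to 1$, i.e.\ $\mathcal G$ is low-density random with density sequence $\left(d(k)\right)_{k\ge 2}$. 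For monotonicity, observe that if $0<d'(k)\le d(k)$ then still $d'(k)<1-t_k$, so the same instance of Proposition~\ref{prop:key}(1) applies verbatim with $d(k)$ replaced by $d'(k)$; hence $\mathcal G$ is low-density random with density sequence $\left(d'(k)\right)_{k\ge 2}$ for every such $\left(d'(k)\right)$, which is exactly the definition of monotone low-density random.

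For part~(2), argue by contradiction: suppose there is a fixed $d>0$ such that $\mathcal G$ is $d$-random in the sense of Definition~\ref{defn:random}, i.e.\ $R_k(n,d,\mathcal G_k)/\gamma(n,\mathcal C_k)^{m_n}\to 1$ for \emph{every} $k\ge 2$, with $m_n=(2k-1)^{dn}$. Since $\sup_k t_k'=1$, choose $k$ large enough that $t_k'>1-d$. Then part~(2) of Proposition~\ref{prop:key}, applied with this $k$ and this $d$ (which satisfies $d>1-t_k'$), gives $\gamma(n,\mathcal P_k)^{m_n}/\gamma(n,\mathcal C_k)^{m_n}\to 0$. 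But $\gamma(n,\mathcal P_k)^{m_n}$ is exactly the number of $m_n$-tuples of length-$n$ elements of $\mathcal C_k$ all of whose coordinates lie in $\mathcal P_k$, which is $R_k(n,d,\mathcal G_k)$. So $R_k(n,d,\mathcal G_k)/\gamma(n,\mathcal C_k)^{m_n}\to 0$, contradicting $d$-randomness of $\mathcal G$ for this particular $k$. Hence no such $d$ exists.

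The argument is essentially bookkeeping: the real content is already packaged in Proposition~\ref{prop:key}, so the only things to be careful about are (i) matching the counting quantity $R_k(n,\cdot,\mathcal G_k)$ with "number of tuples with all coordinates in $\mathcal P_k$," which is immediate from the definition of $\mathcal G_k$, and (ii) getting the quantifiers right — in~(1) the density may depend on $k$ and one works one $k$ at a time, whereas in~(2) one must exploit that a \emph{single} $d$ is claimed to work for all $k$ and then defeat it at one large $k$ using $\sup_k t_k'=1$. I do not anticipate a genuine obstacle; the only mild subtlety is confirming that the monotonicity clause in~(1) follows simply because shrinking $d(k)$ preserves the strict inequality $d(k)<1-t_k$, so that the very same application of Proposition~\ref{prop:key}(1) remains valid.
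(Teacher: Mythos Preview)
Your proposal is correct and matches the paper's approach exactly: the paper states Corollary~\ref{cor:key} without an explicit proof, implicitly regarding it as an immediate consequence of Proposition~\ref{prop:key} together with the definitions of (monotone) low-density randomness, and your write-up supplies precisely that bookkeeping.
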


\section{Comparing the two  models}

The proofs of most existing results related to the
Arzhantseva-Ol'shanskii genericity model rely on proving that certain
subsets $\mathcal P_k\subseteq \mathcal C_k$ are exponentially
generic:

We recall the definitions of crucial genericity conditions for
many results using the Arzhantseva-Ol'shanskii genericity model.

\begin{defn}~\cite{AO} Let $0<\mu<1$ and let $k\ge 2$ be an integer. A
  freely reduced word $w$ in $F(a_1,\dots, a_k)$ is
  \emph{$\mu$-readable} if there exists a finite connected graph
  $\Gamma$, with a distinguished base-vertex, with the following properties:

\begin{enumerate}
\item Every edge $e$ of $\Gamma$ is labelled by some element $s(e)$ of
  $\{a_1,\dots, a_k\}^{\pm 1}$ so that for every edge $e$ we have
  $s(e^{-1})=s(e)^{-1}$.
\item The graph $\Gamma$ is \emph{folded} that is, there is no vertex
  with two distinct edges originating at that vertex and having the
  same label. 
\item $\Gamma$ has no degree-one vertices except possibly for
its base-vertex.
\item The fundamental group of $\Gamma$ is free of rank at most
$k-1$.
\item There exists an immersed path in $\Gamma$ labelled $w$.
\item The volume of $\Gamma$ (that is, the number of non-oriented
  edges) is at most $\mu |w|$.
\end{enumerate}
We denote by  $\mathcal P_k(\mu)$ the set of of all non-$\mu$-readable
elements of $\mathcal C_k$.

\end{defn}

\begin{defn}\label{defn:ML}~\cite{A1}
Let $L\ge 2$ and $k\ge 2$ be integers. Let $0<\mu<1$. We say that a
freely reduced word $v\in F(a_1,\dots, a_k)$ is
\emph{$(\mu,L)$-readable} if there exists  a finite connected graph
  $\Gamma$ with the following properties:

\begin{enumerate}
\item Every edge $e$ of $\Gamma$ is labelled by some element $s(e)$ of
  $\{a_1,\dots, a_k\}^{\pm 1}$ so that for every edge $e$ we have
  $s(e^{-1})=s(e)^{-1}$.
\item The graph $\Gamma$ is folded.
\item The fundamental group of $\Gamma$ is free of rank at most
$L$.
\item The graph $\Gamma$ has at least one vertex of degree $<2k$.
\item The graph $\Gamma$ has at most two degree-1 vertices.
\item There exists an immersed path in $\Gamma$ labelled $v$.
\item The volume of $\Gamma$ (that is, the number of non-oriented
  edges) is at most $\mu |v|$.
\end{enumerate}

We denote by $\mathcal Q_k(\mu,L)$ the set of all non-$(\mu,L)$-readable
elements of $\mathcal C_k$.

\end{defn}

A key result of~\cite{AO} is that for fixed $k$ and a sufficiently
small $\mu$ (namely, when
$\mu<\log_{2k}\left(1+\frac{1}{4k-4}\right)$) the set of
non-$\mu$-readable elements is exponentially generic in
$\mathcal C_k$. Arzhantseva~\cite{A1} also obtained a similar result
regarding non-$(\mu,L)$-readable words:

\begin{prop}\label{prop:gen}\cite{AO,A1}
  Let $k\ge 2$ be an integer and let $F=F(a_1,\dots, a_k)$. Then the
  following hold:

 \begin{enumerate}
 \item Let $0<\mu<\log_{2k}\left(1+\frac{1}{4k-4}\right)<1$. Then the
   set $\mathcal P_k(\mu)$ of all non-$\mu$-readable elements of
   $\mathcal C_k$ is exponentially generic in $\mathcal C_k$.

\item Let $L\ge 2$ be and integer and  let \[0<\mu<\frac{1}{3L}\log_{2k}\left(1+\frac{1}{2(2k-1)^{3L}-2}  \right)<1.\]  Then the set
  $\mathcal Q_k(\mu,L)$ of all non-$(\mu,L)$-readable words in
  $\mathcal C_k$ is exponentially generic in $\mathcal C_k$.
\end{enumerate}

\end{prop}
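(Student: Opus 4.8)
The statement to prove is Proposition~\ref{prop:gen}, part (2): for $L \ge 2$ and $\mu$ sufficiently small, the set $\mathcal Q_k(\mu,L)$ of non-$(\mu,L)$-readable cyclically reduced words is exponentially generic. Since part (1) is attributed to \cite{AO} and part (2) to \cite{A1}, and the bound in part (2) is structurally similar to part (1) with $k$ replaced effectively by $(2k-1)^{3L}$-type quantities, I would prove this by directly estimating the number of cyclically reduced words of length $n$ that \emph{are} $(\mu,L)$-readable and showing this count is $\le c(2k-1)^{tn}$ for some $t<1$, which by condition $(\dag)$ gives exponential genericity.

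\medskip

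\textbf{Plan.} The key is a counting argument. First I would fix $L$ and a prospective $\mu$, and observe that if $v$ is $(\mu,L)$-readable via a graph $\Gamma$, then $\Gamma$ has volume at most $\mu|v|$, fundamental group free of rank at most $L$, at most two degree-one vertices, and at least one vertex of degree $<2k$. The crucial point (exploited in \cite{AO} for part (1)) is that a folded graph over the alphabet $\{a_1,\dots,a_k\}^{\pm1}$ with at most $\ell$ edges, bounded Betti number, and the above degree constraints can be encoded by bounded-length combinatorial data: essentially by listing a spanning tree (which, being folded, embeds isometrically so is determined by at most $(2k-1)^{\ell}$ choices — or better, by the vertex set and a choice of $\le L$ extra edges). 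So the number of admissible graphs $\Gamma$ with volume $\le \mu n$ is at most roughly $(2k-1)^{O(\mu n)} \cdot n^{O(L)}$, i.e.\ exponentially small compared to $(2k-1)^n$ provided $\mu$ is small enough depending only on $L$ and $k$.

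\medskip

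Second, for each such $\Gamma$, I would bound the number of cyclically reduced words $v$ of length $n$ that can be read as an immersed path in $\Gamma$. Here the degree condition (4) — $\Gamma$ has a vertex of degree $<2k$ — is what forces a genuine restriction: an immersed path in a folded graph where not every vertex has full degree $2k$ cannot branch freely, so the number of length-$n$ immersed paths in $\Gamma$ is at most $C \cdot \lambda^n$ where $\lambda < 2k-1$ is the growth rate (Perron--Frobenius eigenvalue of the adjacency/transition structure on directed edges), and crucially $\lambda$ can be bounded away from $2k-1$ in terms of how the deficiency of degree interacts with volume $\le \mu n$. Then summing over all admissible $\Gamma$: the count of $(\mu,L)$-readable words of length $n$ is at most $(2k-1)^{O(\mu n)} n^{O(L)} \cdot C\lambda^n$, and choosing $\mu < \frac{1}{3L}\log_{2k}\bigl(1 + \frac{1}{2(2k-1)^{3L}-2}\bigr)$ makes the total exponent strictly less than $n\log(2k-1)$. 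By $(\dag)$ this yields exponential genericity of $\mathcal Q_k(\mu,L)$.

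\medskip

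\textbf{Main obstacle.} The delicate part is step two: making the transition-matrix / Perron--Frobenius estimate quantitative enough to produce the \emph{specific} bound $\mu < \frac{1}{3L}\log_{2k}\bigl(1 + \frac{1}{2(2k-1)^{3L}-2}\bigr)$. One must carefully track how having a single low-degree vertex in a graph of volume $\le \mu|v|$ constrains the number of immersed paths — a naive bound is too weak because the low-degree vertex might be visited rarely. The trick (as in \cite{AO}) is to subdivide the path $v$ into blocks and argue that on a positive proportion of blocks the path is forced through a "bottleneck," combined with the fact that the rank-$\le L$ and $3L$-related factors enter through bounding the number of relevant subgraphs one must consider; the exponent $3L$ and the factor $\frac{1}{3L}$ come from this block-length bookkeeping. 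I would follow the structure of Arzhantseva's argument in \cite{A1}, adapting the $L=1$ case from \cite{AO} by replacing single relators with the interaction of up to $L$ independent loops.
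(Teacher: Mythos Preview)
The paper does not give its own proof of Proposition~\ref{prop:gen}; the result is quoted from \cite{AO,A1}. What the paper does contain is a related but different estimate, Proposition~\ref{prop:ML}, together with Remark~\ref{rem:H}, which records the key inequality underlying \cite{A1}:
\[
\gamma\bigl(n,\overline{\mathcal Q_k(\mu,L)}\bigr)\le A\Bigl((2k-1)^{3L}-\tfrac{1}{2}\Bigr)^{n/(3L)},
\]
and this is precisely what produces the threshold $\mu<\frac{1}{3L}\log_{2k}\bigl(1+\frac{1}{2(2k-1)^{3L}-2}\bigr)$ in part~(2).

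Your two-step outline (count admissible labelled graphs $\Gamma$, then count length-$n$ immersed paths in each, and multiply) is the right architecture and matches both \cite{A1} and the paper's Proposition~\ref{prop:ML}. The gap is in Step~2. You propose to bound the number of immersed paths in $\Gamma$ by a Perron--Frobenius argument keyed to condition~(4), the existence of a vertex of degree $<2k$. This cannot yield a uniform $\lambda<2k-1$ independent of $n$: the graph $\Gamma$ itself depends on $n$ (its volume may be as large as $\mu n$), and a single deficient vertex in a graph of that size gives a spectral gap that shrinks with $n$. You correctly flag this obstacle, but your proposed fix---a block decomposition forcing passage through a bottleneck---is still phrased in terms of the degree defect, and there is no reason the path must visit the low-degree vertex on any particular block.

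The actual mechanism is different. Both \cite{A1} (as reflected in the displayed bound above) and the paper's Proposition~\ref{prop:ML} use the \emph{rank} bound on $\pi_1(\Gamma)$, which together with the constraint on degree-one vertices forces $\Gamma$ to have at most $3L$ maximal arcs (hence at most $6L$ directed arcs). The path is then encoded as a word in these arcs, and it is the combinatorics of that encoding---not the local degree at a single vertex---that beats $(2k-1)^n$. Condition~(4) is not the engine of the estimate; the paper even notes in the proof of Proposition~\ref{prop:ML} that it is redundant once $L<k$. Your closing paragraph does gesture at the $3L$ arc count, which is the right object; the fix is to drop the spectral heuristic entirely and run the counting directly on the arc decomposition, as in Lemma~3 of \cite{A1} or the proof of Proposition~\ref{prop:ML}.
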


In most results related to the Arzhantseva-Ol'shanskii genericity one
works with intersections of properties that either monotone low-density random
(such as conditions involving non-$\mu$-readable words and
non-$(\mu,L)$-readable words) or monotone random with some density
parameter $d>0$ independent of $k$ (such as the small cancellation
condition $C'(\lambda)$ for a fixed $0<\lambda<1$). Therefore the
resulting conditions are in fact monotone low-density random. We give here a
summary of some statements that follow from the proofs of various
known results related to Arzhantseva-Ol'shanskii genericity using
Corollary~\ref{cor:key}. Next to each item we give a reference to the
source where the corresponding statement was established in the
Arzhantseva-Olshanskii model of genericity.

\begin{thm}\label{thm:alg}
  The following properties are monotone low-density random (where $k$
  varies over $k= 2,\dots$):
\begin{enumerate}

\item~\cite{AO}  the property that a finite group presentation satisfies the $C'(\lambda)$-small cancellation condition (where
  $0<\lambda\le 1/6$ is any fixed number independent of $k$) and
  defines a group $G$ that is   one-ended, torsion-free,  and
  word-hyperbolic;

\item \cite{AO} the property that a finite presentation on generators
  $a_1,\dots, a_k$ defines a group $G$ such that all $(k-1)$-generated subgroups are free and quasiconvex in $G$;
\item \cite{AO} the property that a finite presentation on generators
  $a_1,\dots, a_k$ defines a group $G$ with $rk(G)=k$;
\item \cite{A1,A2} the property that a finite presentation on generators
  $a_1,\dots, a_k$ defines a group $G$ such that all $L_k$-generated subgroups of infinite index in $G$ are free
  and quasiconvex in $G$ (here $L_k$ is any sequence of positive
  integers);
\item \cite{KS} the property that for a $k$-generated finitely presented group
  $G$ there is exactly one Nielsen-equivalence class of $k$-tuples of
  elements generating non-free subgroups.
\end{enumerate}

\end{thm}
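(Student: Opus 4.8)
The plan is to deduce each of the five items from Corollary~\ref{cor:key}(1), combined with two stability observations recorded just after Remark~\ref{rem:0}: the intersection of a monotone low-density random property with a property that is monotone $d$-random for some $d>0$ independent of $k$ is again monotone low-density random, and so is the intersection of two monotone low-density random properties. I would also use the elementary fact that a property that is \emph{implied} by a monotone low-density random property (i.e.\ that holds for a larger set of $m_n$-tuples of relators) is itself monotone low-density random, since the ratios in Definition~\ref{defn:lowd} lie in $[0,1]$ and only increase when the set of good tuples is enlarged, and this passes to any smaller density sequence as well.

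The substantive input is that the proofs in the cited sources all proceed, for each $k\ge 2$, by producing an exponentially generic \emph{relator-template set} $\mathcal P_k\subseteq\mathcal C_k$ (in two of the items intersected with the small-cancellation condition $C'(\lambda)$ on the tuple of relators, with $\lambda$ independent of $k$) such that any finite presentation on $a_1,\dots,a_k$ all of whose defining relators lie in $\mathcal P_k$ (and whose relator-tuple satisfies $C'(\lambda)$ where applicable) defines a group with the stated property. Concretely: for items (1)--(3) one takes $\mathcal P_k=\mathcal P_k(\mu)$ with $\mu<\log_{2k}(1+1/(4k-4))$ as in Proposition~\ref{prop:gen}(1); by \cite{AO}, non-$\mu$-readability of the relators forces every $(k-1)$-generated subgroup of $G$ to be free and quasiconvex, which is (2), and since then the non-free group $G$ cannot be $(k-1)$-generated, $rk(G)=k$, which is (3), while imposing in addition $C'(\lambda)$ (with $\lambda\le 1/6$) yields torsion-freeness and word-hyperbolicity by classical small-cancellation theory and, by \cite{AO}, one-endedness, which is (1). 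For (4) one takes $\mathcal P_k=\mathcal Q_k(\mu,L_k)$, the non-$(\mu,L_k)$-readable words, with $\mu$ chosen for $L=L_k$ as in Proposition~\ref{prop:gen}(2); by \cite{A1,A2} non-$(\mu,L_k)$-readability of the relators forces every $L_k$-generated subgroup of infinite index in $G$ to be free and quasiconvex. For (5) one uses the exponentially generic condition from the proof in \cite{KS}, a non-readability condition together with a small-cancellation condition with fixed parameter, guaranteeing a unique Nielsen-equivalence class of $k$-tuples generating non-free subgroups.

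With the template sets identified, the assembly is routine: Proposition~\ref{prop:gen} (for (1)--(4)) and the argument of \cite{KS} (for (5)) show that each $\mathcal P_k$ is exponentially generic in $\mathcal C_k$, hence $t(\mathcal P_k)<1$; therefore by Corollary~\ref{cor:key}(1) the property ``all defining relators lie in $\mathcal P_k$'' is monotone low-density random. In items (1) and (5) I then intersect with the property that the relator-tuple satisfies $C'(\lambda)$, which for fixed $\lambda$ is monotone $d$-random with some $d>0$ independent of $k$ (the classical genericity of small-cancellation presentations, cf.\ \cite{Ol92,Ch95}); by the first stability observation the intersection remains monotone low-density random. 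Since each property in (1)--(5) is implied by the corresponding (intersection of a) template property, it too is monotone low-density random.

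The main obstacle is not the density bookkeeping, which Corollary~\ref{cor:key} makes automatic, but checking that the arguments of \cite{AO,A1,A2,KS} really do factor through a single exponentially generic relator-template set $\mathcal P_k\subseteq\mathcal C_k$ (plus at most a $C'(\lambda)$ condition with $\lambda$ not depending on $k$), rather than through some finer condition imposed directly on $m$-tuples of relators. For (1)--(4) this is essentially built into the non-readability framework of those papers, so the work is mainly one of careful bookkeeping. The delicate case is (5): there uniqueness of the Nielsen class genuinely combines a non-readability hypothesis with a small-cancellation hypothesis, and one must confirm that the small-cancellation parameter can be taken independent of $k$ while the residual relator condition has genericity entropy $<1$ for each $k$, so that Corollary~\ref{cor:key}(1) applies and the $C'(\lambda)$ part can be split off via the stability observation.
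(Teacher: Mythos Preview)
Your proposal is correct and matches the paper's own treatment: the theorem is stated there as a summary of known results, with the justification being exactly that the proofs in \cite{AO,A1,A2,KS} factor through exponentially generic relator-template sets $\mathcal P_k\subseteq\mathcal C_k$ (non-readability conditions) possibly intersected with a $C'(\lambda)$ condition for fixed $\lambda$, so Corollary~\ref{cor:key}(1) together with the stability remarks after Remark~\ref{rem:0} yields monotone low-density randomness. One small point of bookkeeping: in practice the arguments for items (2)--(4), like those for (1) and (5), also require the $C'(\lambda)$ small-cancellation hypothesis to run the van~Kampen diagram analysis (cf.\ Proposition~\ref{prop:tech} for the case of (4)), but since you already handle such intersections via the stability observation this does not affect your argument.
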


Regarding condition (1) in Theorem~\ref{thm:alg}, it is known that the
property of a finitely presented group to be non-elementary torsion-free
word-hyperbolic is in fact monotone random and not just low-density
random (see Theorem~2 in \cite{Olliv1}, Theorem~11 in \cite{Olliv4})). It is also known and not hard to prove that if
$0<\lambda<1$ and $0<d<\lambda/2$ then the $C'(\lambda)$ small
cancellation condition is a monotone $d$-random property (see, for example, Proposition~10 of \cite{Olliv4}).

Unlike the case of the standard small cancellation condition, the
genericity entropy $t$ for exponentially generic sets arising from the
Arzhantseva-Ol'shanskii non-readability conditions usually depends on $k$ and
in fact converges to $1$ as $k\to\infty$. This situation is different from the
standard small cancellation conditions where the genericity entropy is
easily seen to have a positive upper bound which is separated from $1$ and
independent of $k$. In Section~\ref{sect:exmp}
we establish this for the non-$\mu$-readability condition and show
that in that case $\lim_{k\to\infty} t'(\mathcal P_k(\mu))=\lim_{k\to\infty}
t(\mathcal P_k(\mu))=1$. Hence for
$0<d(k)<1-t'(\mathcal P_k)$ we have $\lim_{k\to\infty} d(k)=0$ and, in
view of Corollary~\ref{cor:key}, the notion of low-density randomness
becomes necessary.

\section{Detailed examples of low-density random but not random properties}\label{sect:exmp}

\begin{prop}\label{prop:read}
  Let $k\ge 2$ and let $0<\mu_k<1$. Let $\mathcal P_k(\mu_k)\subseteq \mathcal
  C_k$ be the set of all cyclically reduced words in $F(a_1,\dots,
  a_k)$ that are not $\mu_k$-readable. Then
\[
1\ge t(\mathcal P_k)\ge t'(\mathcal P_k)\ge\frac{\log(2k-3)}{\log(2k-1)}
\]
and hence
\[
\lim_{k\to\infty} t(\mathcal P_k)=\lim_{k\to\infty} t'(\mathcal P_k)=1.
\]
\end{prop}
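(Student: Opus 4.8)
The plan is to show that the complement $\overline{\mathcal P_k(\mu_k)}$ of the set of non-$\mu_k$-readable words — that is, the set of $\mu_k$-readable cyclically reduced words of length $n$ — contains a large family of words, forcing $\gamma(n,\overline{\mathcal P_k})$ to grow at least like $(2k-3)^n$. The lower bound $t'(\mathcal P_k)\ge \frac{\log(2k-3)}{\log(2k-1)}$ then follows immediately from the definition of the lower genericity entropy, and since $\frac{\log(2k-3)}{\log(2k-1)}\to 1$ as $k\to\infty$, together with the trivial upper bounds $t(\mathcal P_k)\le 1$ and $t'(\mathcal P_k)\le t(\mathcal P_k)$, we conclude $\lim_k t'(\mathcal P_k)=\lim_k t(\mathcal P_k)=1$.

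The heart of the argument is exhibiting the required $\mu_k$-readable words. Fix two generators, say $a_1$ and $a_2$, and consider cyclically reduced words of length $n$ in the \emph{reduced} sub-alphabet that at every step avoids the two letters $a_1^{-1}$ and $a_2$ (or some fixed pair chosen so that reducedness and cyclic reducedness are automatic): concretely, look at the words that can be read as immersed loops in a suitable small fixed subgraph $\Gamma_0$ of the rose whose fundamental group has rank at most $k-1$ — for instance, a wedge of two circles labelled $a_1$ and $a_2$ — avoiding enough letters to keep the count at $(2k-3)^n$ up to a bounded factor. For any such word $w$, the graph $\Gamma_0$ itself witnesses $\mu_k$-readability: it is folded, has fundamental group free of rank $\le k-1$, has no degree-one vertices except possibly the base vertex, admits an immersed path labelled $w$, and its volume is an \emph{absolute constant} $V_0$ independent of $n$. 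Since $n\to\infty$, eventually $V_0\le \mu_k n$, so all but finitely many of these words lie in $\overline{\mathcal P_k(\mu_k)}$. Counting them — there are on the order of $(2k-3)^n$ reduced words over an appropriately restricted alphabet of this size — gives $\gamma(n,\overline{\mathcal P_k})\ge c\,(2k-3)^n$ for all large $n$, hence $\frac{\log\gamma(n,\overline{\mathcal P_k})}{n\log(2k-1)}\ge \frac{\log(2k-3)}{\log(2k-1)}-o(1)$, which yields the claimed bound on $t'$.

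The main obstacle is bookkeeping rather than conceptual: one must pin down a sub-alphabet (or a small fixed graph $\Gamma_0$) so that (a) every word read in it is genuinely reduced \emph{and} cyclically reduced, so that the words are legitimate elements of $\mathcal C_k$, (b) the number of length-$n$ such words is genuinely $\Theta\big((2k-3)^n\big)$ rather than something smaller, and (c) $\Gamma_0$ literally satisfies all six clauses of the definition of $\mu$-readability (in particular clauses (3) and (4), on degree-one vertices and on the rank of $\pi_1$). The cleanest choice is to take the $(2k-3)$ letters obtained by deleting one generator $a_k$ together with its inverse $a_k^{-1}$, plus one more letter to break any would-be reductions; then $\Gamma_0$ is the folded graph on one vertex (the base vertex) with $k-1$ loops, which has rank $k-1$ exactly, no non-base degree-one vertices, and constant volume $k-1$. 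Verifying that the resulting word count is $(2k-3)^n$ up to a constant is a routine transfer-matrix estimate that I would not carry out in detail. Once this is in place, the passage to the limit in $k$ is immediate.
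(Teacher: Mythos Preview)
Your final construction --- the rose $\Gamma_0$ with one vertex and $k-1$ loops labelled $a_1,\dots,a_{k-1}$ --- is exactly what the paper uses, and once you commit to it the argument is identical: every cyclically reduced word in $F(a_1,\dots,a_{k-1})$ of length exceeding $(k-1)/\mu_k$ is $\mu_k$-readable via $\Gamma_0$, so $\gamma(n,\overline{\mathcal P_k})\ge \gamma(n,\mathcal C_{k-1})\ge c(2k-3)^n$, giving $t'(\mathcal P_k)\ge \log(2k-3)/\log(2k-1)$.

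That said, your exposition needs tidying. The middle paragraph's suggestion of a wedge of \emph{two} circles labelled $a_1,a_2$ is wrong for the stated bound: that graph has rank $2$, and the words readable in it grow only like $3^n$, not $(2k-3)^n$. You should delete that false start. Also, in the last paragraph the phrase ``the $(2k-3)$ letters obtained by deleting $a_k$, $a_k^{-1}$, plus one more letter'' is garbled: deleting $a_k^{\pm 1}$ leaves $2k-2$ letters, and the $(2k-3)^n$ count arises not from the alphabet size but from the growth rate of (cyclically) reduced words in $F_{k-1}$, where after the first letter each step has $2k-3$ choices. No ``transfer-matrix estimate'' is needed; the bound $\gamma(n,\mathcal C_{k-1})\ge c(2k-3)^n$ is immediate (indeed the paper cites Rivin's exact formula earlier).
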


\begin{proof}
Let $\Gamma$ be the wedge of $(k-1)$ loop-edges labelled by
$a_1,\dots, a_{k-1}$. Then any freely reduced word from
$F(a_1,\dots, a_{k-1})$ can be read as the label of a path in
$\Gamma$. Hence for any $w\in \mathcal C_{k-1}$ with $|w|>(k-1)/\mu_k$ the word
$w$ is $\mu_k$-readable, that is,  $w\in \overline{\mathcal P_k}$.
Thus for $n\ge 1+\mu_k^{-1}(k-1)$ we have
\[
\gamma(n,\overline{\mathcal P_k})\ge \gamma(n, \mathcal C_{k-1})\ge c
(2k-3)^n
\]
for some constant $c>0$ independent of $n$.

Therefore
\begin{gather*}
t'(\mathcal P_k)\ge \liminf_{n\to\infty} \frac{\log c(2k-3)^n}{n\log
(2k-1)}=\frac{\log(2k-3)}{\log(2k-1)},
\end{gather*}
as claimed.
\end{proof}

Part (2) of Corollary~\ref{cor:key} immediately implies:

\begin{cor}\label{cor:read}
Let $0<\mu_k<1$ for $k\ge 2$. Let $\mathcal G=(\mathcal G_k)_{k\ge
2}$, where $\mathcal G_k$ is the property that for a finite
presentation on $k$ generators all the defining relations are
non-$\mu_k$-readable.

Then $\mathcal G$ is  not $d$-random for any $d>0$.
\end{cor}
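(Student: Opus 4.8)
The plan is to reduce Corollary~\ref{cor:read} directly to Proposition~\ref{prop:read} together with part~(2) of Corollary~\ref{cor:key}. Recall that $\mathcal G_k$ is precisely the property that all defining relations of a presentation on $k$ generators lie in $\mathcal P_k(\mu_k)$, which is exactly the setup of Corollary~\ref{cor:key} with $\mathcal P_k = \mathcal P_k(\mu_k)$. So the entire content to be extracted is the hypothesis $\sup_k t'(\mathcal P_k(\mu_k)) = 1$ appearing in Corollary~\ref{cor:key}(2).

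First I would invoke Proposition~\ref{prop:read}, which gives, for every $k \ge 2$ and every choice of $0 < \mu_k < 1$, the bound
\[
t'(\mathcal P_k(\mu_k)) \ge \frac{\log(2k-3)}{\log(2k-1)}.
\]
Next I would observe that $\frac{\log(2k-3)}{\log(2k-1)} \to 1$ as $k \to \infty$ (both numerator and denominator tend to $+\infty$ and their ratio tends to $1$), so $\sup_{k \ge 2} t'(\mathcal P_k(\mu_k)) = 1$. Combined with the trivial upper bound $t'(\mathcal P_k(\mu_k)) \le 1$, this yields $\sup_k t'(\mathcal P_k(\mu_k)) = 1$ regardless of how the sequence $(\mu_k)$ is chosen. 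Then I would apply Corollary~\ref{cor:key}(2) verbatim with $\mathcal P_k = \mathcal P_k(\mu_k)$: it states precisely that if $\sup_k t_k' = 1$ then no $d > 0$ makes $\mathcal G$ $d$-random. This completes the argument.

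There is essentially no obstacle here: the statement is a packaging corollary, and the only mildly nontrivial point — that the lower bound on the lower genericity entropy forces the supremum to be $1$ — is already done inside Proposition~\ref{prop:read}. The one thing worth making explicit is that the conclusion holds \emph{uniformly} in the sequence $(\mu_k)_{k \ge 2}$, since the lower bound in Proposition~\ref{prop:read} does not depend on $\mu_k$ at all; in particular the result does not require the $\mu_k$ to be small enough for $\mathcal P_k(\mu_k)$ to be exponentially generic. I would phrase the proof in two or three sentences and cite the two earlier results directly.
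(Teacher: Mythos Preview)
Your proposal is correct and matches the paper's approach exactly: the paper simply states that part~(2) of Corollary~\ref{cor:key} immediately implies the result, with the needed hypothesis $\sup_k t'(\mathcal P_k(\mu_k))=1$ coming from the immediately preceding Proposition~\ref{prop:read}. Your write-up just makes explicit the (trivial) unpacking that the paper leaves implicit.
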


Similarly, one obtains:

\begin{cor}\label{cor:magnus}
Let $\mathcal G=(\mathcal G_k)_{k\ge 2}$, where $\mathcal G_k$ is the
property that a finite group presentation on the generators
$a_1,\dots a_k$ defines a group $G$ such that every proper subset of
$a_1,\dots a_k$ freely generates a free subgroup of $G$.

Then $\mathcal G$ is monotone low-density random but not $d$-random
for any $d>0$.
\end{cor}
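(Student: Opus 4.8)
The plan is to deduce Corollary~\ref{cor:magnus} from the machinery already in place, exactly as the phrase ``Similarly, one obtains'' suggests. The statement has two halves, and each is handled by one of the two parts of Corollary~\ref{cor:key}. For the ``monotone low-density random'' half, I would exhibit, for each $k$, a set $\mathcal P_k\subseteq\mathcal C_k$ of cyclically reduced words with genericity entropy $t(\mathcal P_k)<1$, such that whenever all defining relators of a presentation on $a_1,\dots,a_k$ lie in $\mathcal P_k$, the resulting group has the Freiheitssatz property that every proper subset of the generators freely generates a free subgroup. The natural choice is $\mathcal P_k=\mathcal P_k(\mu)$, the set of non-$\mu$-readable words, for a fixed admissible $\mu<\log_{2k}\bigl(1+\tfrac1{4k-4}\bigr)$: by Proposition~\ref{prop:gen}(1) this set is exponentially generic, hence $t(\mathcal P_k(\mu))<1$, and the Arzhantseva--Ol'shanskii argument from~\cite{AO} shows that a presentation all of whose relators are non-$\mu$-readable defines a group in which all $(k-1)$-generated subgroups are free (item (2) of Theorem~\ref{thm:alg}), which in particular forces each proper subset of $\{a_1,\dots,a_k\}$ to generate a free subgroup. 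Then Corollary~\ref{cor:key}(1) gives monotone low-density randomness.

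For the second half — that $\mathcal G$ is not $d$-random for any $d>0$ — I would apply Corollary~\ref{cor:key}(2), which requires producing a set $\mathcal P_k$ of relators forcing the Freiheitssatz property but with $\sup_k t'(\mathcal P_k)=1$. The key point is that the Freiheitssatz property is a \emph{necessary} consequence of being in $\mathcal G_k$, so I can take $\mathcal P_k$ to be the \emph{maximal} such set: the set of all cyclically reduced $w\in\mathcal C_k$ with the property that $\langle a_1,\dots,a_k\mid w\rangle$ (equivalently, any one-relator presentation with relator $w$) still has every proper subset of generators freely generating a free subgroup — or more simply, I argue directly about $\overline{\mathcal G_k}$. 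The cleanest route mirrors the proof of Proposition~\ref{prop:read}: I claim that every cyclically reduced word $w\in\mathcal C_{k-1}$ (a word not involving $a_k^{\pm1}$) of sufficiently large length is a ``bad'' relator, because adding such a relation to $\langle a_1,\dots,a_k\mid\ \rangle$ kills the freeness of the subgroup generated by $\{a_1,\dots,a_{k-1}\}$ (that subgroup maps onto $F(a_1,\dots,a_{k-1})/\langle\langle w\rangle\rangle$, which is not free once $w$ is a nontrivial proper relator — e.g. any $w$ that is not a proper power and not primitive gives a non-free quotient). Hence $\gamma(n,\overline{\mathcal P_k})\ge \gamma(n,\text{bad words in }\mathcal C_{k-1})\ge c(2k-3)^n$, so $t'(\mathcal P_k)\ge\frac{\log(2k-3)}{\log(2k-1)}\to 1$, and Corollary~\ref{cor:key}(2) finishes it.

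The main obstacle, and the point requiring care, is the middle claim in the second half: that ``most'' words $w\in\mathcal C_{k-1}$ of length $n$, when imposed as a relator, actually destroy the freeness of the $(k-1)$-generator subgroup. One must avoid the accidental cases where $F(a_1,\dots,a_{k-1})/\langle\langle w\rangle\rangle$ happens to be free — this occurs exactly when $w$ is trivial or part of a free basis (a primitive element), or when $k-1=1$ and $w=a_1^{\pm1}$. Primitive elements of $F(a_1,\dots,a_{k-1})$ of length $n$ are exponentially rare (their number grows like $C^n$ with $C<2(k-1)-1=2k-3$, by standard counting of Whitehead-minimal primitives), so removing them does not lower the exponential growth rate of the bad set below $(2k-3)^n$. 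Thus I still get $\gamma(n,\overline{\mathcal P_k})\ge c(2k-3)^n$ for large $n$, and the entropy estimate goes through. A secondary subtlety is that $\mathcal G_k$ concerns presentations with an arbitrary number $m$ of relators, whereas the counting in Corollary~\ref{cor:key} is set up precisely for the ``all relators in $\mathcal P_k$'' formulation; but since the Freiheitssatz property for $G$ is inherited from the property holding for the subpresentation generated by any single relator (adding more relators only collapses more), the one-relator obstruction already suffices to place the whole $m$-tuple outside $\mathcal G_k$ whenever one coordinate is bad, which is exactly the direction Corollary~\ref{cor:key}(2) uses.
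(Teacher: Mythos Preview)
Your argument has one recurring slip: you treat the Freiheitssatz property as ``every proper subset of the generators generates a \emph{free} subgroup,'' whereas the statement says ``\emph{freely} generates a free subgroup,'' i.e., the subset must be a free basis of the subgroup it generates. This misreading affects both halves.

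For the ``not $d$-random'' half your argument is correct in outcome but needlessly complicated, and the ``maps onto'' step is in fact backwards: the subgroup $\langle a_1,\dots,a_{k-1}\rangle_G$ is a \emph{quotient} of $F(a_1,\dots,a_{k-1})/\langle\langle w\rangle\rangle$, not a preimage, so non-freeness of the latter does not transfer. Fortunately none of this is needed. If some relator $w$ lies in $\mathcal C_{k-1}$ and is nontrivial, then $w=1$ holds in $G$, which is already a nontrivial relation among $a_1,\dots,a_{k-1}$; hence $\{a_1,\dots,a_{k-1}\}$ fails to \emph{freely} generate, regardless of whether $w$ is primitive and regardless of whether the subgroup happens to be free. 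So every nontrivial $w\in\mathcal C_{k-1}$ is already bad, $\gamma(n,\overline{\mathcal P_k})\ge c(2k-3)^n$, and Corollary~\ref{cor:key}(2) applies exactly as in Proposition~\ref{prop:read}. This is precisely the paper's argument.

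For the ``monotone low-density random'' half, your route via non-$\mu$-readability and Theorem~\ref{thm:alg}(2) has a genuine gap under the correct reading: knowing that all $(k-1)$-generated subgroups are free tells you $\langle a_1,\dots,a_{k-1}\rangle$ is free, but not that it has rank $k-1$, so you cannot yet conclude that $\{a_1,\dots,a_{k-1}\}$ is a free basis. This is repairable by also invoking Theorem~\ref{thm:alg}(3): if $rk(G)=k$ then the proper subset cannot generate a free group of smaller rank (else $G$ would have fewer than $k$ generators), and a free group of rank $r$ generated by $r$ elements has those elements as a free basis. The paper takes a different and more direct route: it intersects the $C'(1/6)$ condition (monotone random) with the exponentially generic set of cyclically reduced $r$ such that every subword of length $|r|/6$ involves all the $a_i$. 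Under these two hypotheses, Greendlinger's lemma forces any nontrivial word equal to $1$ in $G$ to contain more than half of some relator, hence to involve every $a_i$; so no nontrivial relation can exist among a proper subset of the generators. Then Corollary~\ref{cor:key}(1) finishes. This avoids the detour through the Arzhantseva--Ol'shanskii readability machinery entirely.
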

\begin{proof}
The fact that $\mathcal G$ is not $d$-random for any $d>0$ follows
from part (2) of  Corollary~\ref{cor:key} by the same argument as in
the proof of Proposition~\ref{prop:read}.

It is well-known (see, for example, \cite{Olliv4}, Proposition~10)
that the $C'(\lambda)$ small cancellation condition is a monotone
$d$-random property for any $0<d<\lambda/2$. 
It is also easy to see that the set of cyclically reduced words $r$ in
$\mathcal C_k$ such that every subword of $r$ of length $|r|/6$
involves all the generators $a_1,\dots, a_k$, is exponentially
generic in $\mathcal C_k$. Let $G$ be given by a
$C'(1/6)$-presentation on the generators $a_1,\dots, a_k$ where all
the defining relations $r$ have the property that every subword of
$r$ of length $|r|/6$ involves all the generators $a_1,\dots, a_k$.
Then every proper subset of $a_1,\dots, a_k$ freely generates a
subgroup of $G$. It now follows from part (1) of
Corollary~\ref{cor:key} that $\mathcal P$ is monotone low-density
random.
\end{proof}
One can regard property $\mathcal G$ from Corollary~\ref{cor:magnus}
above as a version of Magnus'  Freiheitssatz for random
groups. An  asymptotic version of the Freiheitssatz using  another
model   introduced by Gromov ~\cite{Grom}  was obtained by
Cherix and Schaeffer \cite{Che98}.

Similar arguments to those used above yield:

\begin{prop}\label{prop:rank}
  Let $\mathcal G=(\mathcal G_k)_{k\ge 2}$ where $\mathcal G_k$ is the
  property that a finite group presentation on $a_1,\dots, a_k$
  defines a group of rank $k$. Then $\mathcal G$ is monotone
  low-density random but not $d$-random for any $d>0$.
\end{prop}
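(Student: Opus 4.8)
The plan is to deduce both halves of the statement from Corollary~\ref{cor:key}, exactly as was done for the non-readability condition and for the Freiheitssatz property. For the ``monotone low-density random'' half, I would exhibit an exponentially generic set $\mathcal P_k\subseteq\mathcal C_k$ such that whenever all defining relators of a presentation on $a_1,\dots,a_k$ lie in $\mathcal P_k$, the resulting group has rank exactly $k$; then part (1) of Corollary~\ref{cor:key} applies. A clean choice is to intersect the $C'(1/6)$ small cancellation condition with a suitable non-readability-type condition: recall from Theorem~\ref{thm:alg}(3) (due to \cite{AO}) that the property ``$rk(G)=k$'' is already known to hold on an exponentially generic set, so really one only needs to quote that set. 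Concretely, one takes $\mathcal P_k$ to be the exponentially generic set used in the Arzhantseva--Ol'shanskii proof that generic $k$-generated $m$-related groups have rank $k$; since that set is exponentially generic it has genericity entropy $t(\mathcal P_k)<1$, and part (1) of Corollary~\ref{cor:key} immediately yields that $\mathcal G$ is monotone low-density random.

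For the ``not $d$-random for any $d>0$'' half, I would mimic the proof of Proposition~\ref{prop:read}. The point is to show that the complementary set $\overline{\mathcal G_k}$ is large in the sense that its lower genericity entropy $t'$ tends to $1$ as $k\to\infty$; then part (2) of Corollary~\ref{cor:key} finishes the argument. To produce many presentations whose group has rank strictly less than $k$, observe that if every defining relator $r_i$ is a cyclically reduced word in the proper sub-free-group $F(a_1,\dots,a_{k-1})$, then $G$ splits as a free product $\langle a_1,\dots,a_{k-1}\mid r_1,\dots,r_m\rangle * \langle a_k\rangle$, and generically (for suitable $r_i$, e.g.\ satisfying small cancellation) the first factor has rank $\le k-1$, so $rk(G)\le k$; in fact by choosing the $r_i$ to kill a generator — say include relators forcing $\langle a_1,\dots,a_{k-1}\mid\dots\rangle$ to be generated by $a_1,\dots,a_{k-2}$, or more simply note that generic one-relator-type quotients of $F(a_1,\dots,a_{k-1})$ already have rank $k-1$ — one arranges $rk(G)<k$. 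The crude counting estimate is then the same as in Proposition~\ref{prop:read}: the number of cyclically reduced words of length $n$ in $F(a_1,\dots,a_{k-1})$ is $\sim(2(k-1)-1)^n=(2k-3)^n$, so the number of $m$-tuples of such words is $(2k-3)^{nm}$, whence
\[
\gamma\bigl(n,\overline{\mathcal P_k}\bigr)\ \ge\ c\,(2k-3)^n
\]
for the relevant set of bad single relators, giving $t'(\overline{\mathcal G_k}\text{-relators})\ge \frac{\log(2k-3)}{\log(2k-1)}\to 1$ as $k\to\infty$. Applying part (2) of Corollary~\ref{cor:key} with $\sup_k t_k'=1$ shows $\mathcal G$ is not $d$-random for any $d>0$.

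The main obstacle, and the step requiring genuine care rather than bookkeeping, is the second half: one must verify that there really is an exponentially generic (in $\mathcal C_{k-1}$) family of single words $r$, or a bounded-size family of tuples, such that the quotient of $F(a_1,\dots,a_{k-1})$ has rank exactly $k-1$ (not smaller and not requiring $a_k$), so that adjoining $a_k$ freely produces a group of rank exactly $k-1<k$ rather than accidentally of rank $k$. This is where one invokes the Arzhantseva--Ol'shanskii rank result in rank $k-1$ instead of rank $k$: a generic one-relator (or few-relator) quotient of $F(a_1,\dots,a_{k-1})$ has rank $k-1$, hence $G=G_0*\langle a_k\rangle$ has rank $\le k-1 < k$ for exponentially many choices of the relators among words in $F(a_1,\dots,a_{k-1})$. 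Counting those choices among all cyclically reduced words of length $n$ in $F(a_1,\dots,a_k)$ gives the growth rate $(2k-3)^n$ versus $(2k-1)^n$, and hence the bound $t'\ge \log(2k-3)/\log(2k-1)$. Everything else — the monotonicity, the passage from single-relator counts to $m$-tuple counts, the limit $\log(2k-3)/\log(2k-1)\to 1$ — is routine and parallel to Proposition~\ref{prop:read} and Corollary~\ref{cor:magnus}.
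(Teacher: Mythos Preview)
Your first half is fine and matches the paper: monotone low-density randomness of $\mathcal G$ is already part of Theorem~\ref{thm:alg}, so one just cites it.

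Your second half contains a genuine error. You propose to take all relators in $F(a_1,\dots,a_{k-1})$, so that $G\cong G_0\ast\langle a_k\rangle$ with $G_0=\langle a_1,\dots,a_{k-1}\mid r_1,\dots,r_m\rangle$, and then you assert that if $rk(G_0)=k-1$ one gets $rk(G)\le k-1<k$. That is false: by Grushko's theorem $rk(G_0\ast\mathbb Z)=rk(G_0)+1$, so if $rk(G_0)=k-1$ then $rk(G)=k$, and your ``bad'' presentations are actually \emph{good} for $\mathcal G_k$. To make your route work you would need $rk(G_0)\le k-2$, i.e.\ the relators must genuinely kill a generator of $F(a_1,\dots,a_{k-1})$; but generic quotients of $F_{k-1}$ have rank exactly $k-1$, so the set of such relators is \emph{not} exponentially large in the way you need, and the $(2k-3)^n$ count no longer measures anything relevant. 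There is also a structural mismatch with Corollary~\ref{cor:key}(2): that corollary applies to a property of the form ``all relators lie in $\mathcal P_k$'' containing $\mathcal G_k$, and you never identify such a $\mathcal P_k$.

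The paper's fix is short and clean: take $\mathcal P_k\subseteq\mathcal C_k$ to be the set of \emph{non-primitive} elements. If any single relator $r_i$ is primitive in $F(a_1,\dots,a_k)$ then $rk(G)\le k-1$, so $\mathcal G_k\subseteq\mathcal G'_k:=\{\text{all relators non-primitive}\}$. Now $a_1w$ is primitive for every freely reduced $w\in F(a_2,\dots,a_k)$, giving $\gamma(n,\overline{\mathcal P_k})\ge (2k-2)(2k-3)^{n-2}$ and hence $t'(\mathcal P_k)\ge\log(2k-3)/\log(2k-1)\to 1$. Corollary~\ref{cor:key}(2) then shows $\mathcal G'$, and therefore $\mathcal G$, is not $d$-random for any $d>0$.
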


\begin{proof}
  We have already observed in Theorem~\ref{thm:alg} that
  $\mathcal G$ is monotone low-density random. Let $\mathcal
  G'=(\mathcal G_k')_{k\ge 2}$ where $\mathcal G_k'$ is the property
  that for a finite presentation on $a_1,\dots, a_k$ none of the
  defining relations are primitive in $F(a_1,\dots, a_k)$. Clearly, if
  $G=\langle a_1,\dots, a_k | r_1,\dots, r_m\rangle$ and some $r_i$ is
  a primitive element in $F(a_1,\dots, a_k)$ (that is $r_i$ belongs to
  some free basis of $F(a_1,\dots, a_k)$) then $rk(G)\le k-1$. Thus
  $\mathcal G_k\subseteq \mathcal G_k'$ and $\mathcal G\subseteq
  \mathcal G'$.  It suffices to show that $\mathcal G'$ is not
  $d$-random for any $d>0$.

  Let $\mathcal P_k\subseteq \mathcal C_k$ be the set of all
  non-primitive elements in $\mathcal C_k$.  Note that, for any freely
  reduced word $w\in F(a_2,\dots a_k)$, the element $a_1w$ is primitive
  in $F(a_1,\dots, a_k)$. Hence
\[
\gamma(n,\overline{\mathcal P_k})\ge \gamma(n-1,
F_{k-1})=(2k-2)(2k-3)^{n-2}.
\]
Therefore
\[
1\ge t(\mathcal P_k)\ge t'(\mathcal P_k)\ge
\frac{\log(2k-3)}{\log(2k-1)}\to_{k\to \infty} 1.
\]
Part (2) of Corollary~\ref{cor:key} implies that $\mathcal G'$ is
not $d$-random for any $d>0$.
\end{proof}

\section{A bounded freeness property}\label{sect:ML}

In this section we will show that for every fixed integer $L\ge 2$
there is some $0<d<1$ such that the property
of a finitely presented group that  all its $L$-generated subgroups of
infinite index are  free is monotone $d$-random.

First, we need to investigate the genericity entropy of the set of
non-$(\mu,L)$-readable words.
Recall that $\mathcal Q_k(\mu,L)$ is the set of all words in $\mathcal
C_k$ that are not $(\mu,L)$-readable.
The proof of the following proposition is similar to the counting
arguments used in \cite{A1,AO}, with a variation whose significance is
explained further in Remark~\ref{rem:H} below.

\begin{prop}\label{prop:ML}
Let $k\ge 2$ be a fixed integer and let $2\le L<k$.
Then we have:
\[
\gamma(n, \overline{\mathcal Q_k(\mu,L)})\le C  (\mu n)^{3L+1}  (6L)^n (2k-1)^{\mu n}.
\]
where $C>0$ is independent of $n$.
\end{prop}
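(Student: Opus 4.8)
The plan is to bound the number of $(\mu,L)$-readable words of length $n$ by a counting argument over the graphs $\Gamma$ that witness readability. Suppose $v\in\mathcal C_k$ is $(\mu,L)$-readable via a graph $\Gamma$ as in Definition~\ref{defn:ML}. Since the fundamental group of $\Gamma$ is free of rank at most $L$, the graph $\Gamma$ has first Betti number at most $L$; after removing degree-one vertices (of which there are at most two) one obtains a core graph whose size is controlled, and the number of topological types of such a core (as a graph with at most $L$ independent cycles and at most two attached arcs) is bounded by a constant depending only on $L$. The key structural input is condition (4): $\Gamma$ has a vertex of degree $<2k$, so $\Gamma$ is \emph{not} the full wedge of $k$ loops, and this is what forces the number of labellings to be small. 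The plan is to stratify $(\mu,L)$-readable words by (a) the number $q$ of non-oriented edges of $\Gamma$, which by condition (7) satisfies $q\le\mu n$, and (b) the number of distinct edge-labels actually used.

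The main steps: First, fix $q\le\mu n$. The number of folded labelled graphs $\Gamma$ with $q$ edges, with fundamental group of rank $\le L$, with at most two degree-one vertices, and with a vertex of degree $<2k$: the underlying topological graph has at most $\sim 3L$ "essential" vertices (branch points and the two endpoints) and the rest of the structure is subdivided arcs, so the combinatorial type is bounded by a polynomial $C(\mu n)^{3L+1}$ in $q$ (this is where the exponent $3L+1$ comes from — roughly the number of ways to distribute $q$ edges among $O(3L)$ arcs). Second, count the labellings. Because some vertex has degree $<2k$, the edges emanating from that vertex use at most $2k-1$ of the $2k$ possible labels; more importantly, when we read the word $v$ as an immersed path of length $n$ in $\Gamma$, each step is determined by choosing an outgoing edge at the current vertex. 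At a vertex of degree $<2k$ at least one letter is forbidden, and the degree bound combined with foldedness means that along the path there are at most $\sim 6L$ choices at a "typical" step — here one uses that most of $\Gamma$ consists of degree-$2$ arcs (forced moves) so the genuine branching happens only near the $O(L)$ branch vertices, giving at most $(6L)^n$ as a crude bound on the number of immersed paths of length $n$ in a fixed $\Gamma$ of small complexity, or alternatively $(2k-1)^{\mu n}$ to name the edges of $\Gamma$ and then the path is cheap. Third, multiply: (topological types) $\times$ (labelled graphs for a fixed type, which costs $(2k-1)^{q}\le(2k-1)^{\mu n}$ for labelling the $\le\mu n$ edges) $\times$ (immersed paths of length $n$, bounded by $(6L)^n$ since each vertex has degree $\le 2k$ but the branching actually seen is $O(L)$). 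Summing the polynomial factor over $q\le\mu n$ absorbs into the constant $C$, yielding $\gamma(n,\overline{\mathcal Q_k(\mu,L)})\le C(\mu n)^{3L+1}(6L)^n(2k-1)^{\mu n}$.

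The hard part will be pinning down the $(6L)^n$ factor honestly, i.e.\ showing that the number of immersed (locally injective, no-backtrack) paths of length $n$ in a folded graph $\Gamma$ whose core has rank $\le L$ and $\le 2$ pendant arcs is at most $(6L)^n$ up to polynomial factors. The point is that an immersed path is forced along every degree-$2$ segment and only "decides" at the branch vertices; since there are $O(L)$ branch vertices, each of bounded degree, the path is essentially coded by the sequence of branch vertices it visits together with the turn taken, and the number of such codes of total length $n$ is at most $(cL)^n$ for a suitable constant — the constant $6$ here is where Remark~\ref{rem:H} presumably explains the improvement over the cruder $2k-1$ that a naive count would give, and getting the right constant requires carefully bounding the number of distinct directed edges out of the branch set relative to the number of cycles. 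A secondary technical point is handling the at-most-two degree-one vertices and the base-vertex issue (the path need not start or end at a branch vertex), but this only affects the polynomial prefactor, not the exponential rate.
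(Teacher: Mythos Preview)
Your proposal has the right architecture --- count labelled graphs $\Gamma$, then count immersed paths in a fixed $\Gamma$, then multiply --- and this is exactly what the paper does. Two points deserve correction, though.

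First, you misidentify the role of condition (4). You call it ``the key structural input'' and say it forces the labelling count to be small. In fact the paper's first observation is that condition (4) is \emph{redundant} under the hypothesis $L<k$: any finite connected graph with $\pi_1$ of rank $\le L<k$ automatically has a vertex of degree $<2k$. The bound never uses condition (4); what controls the combinatorics is purely the rank-$\le L$ condition together with the ``at most two degree-one vertices'' condition, which together force $\Gamma$ to have at most $3L$ undirected (hence $6L$ directed) maximal arcs. So your sentence ``at a vertex of degree $<2k$ at least one letter is forbidden'' is irrelevant to the estimate.

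Second, you flag the $(6L)^n$ factor as ``the hard part'' and sketch an argument via forced moves along degree-$2$ segments and branching at $O(L)$ vertices. The paper's route is much simpler: an immersed path of length $n$ in $\Gamma$ is (after trimming $\Gamma$ so the endpoints lie at arc-endpoints) a concatenation $p_1p_2\cdots p_s$ of directed maximal arcs with $s\le n$, since each arc has length $\ge 1$. There are at most $6L$ directed maximal arcs, so the path is a word of length $\le n$ in an alphabet of size $\le 6L$, giving $\le (6L)^n$ directly. No analysis of branch-vertex degrees or ``turns'' is needed; the exponent base $6L$ is just the size of the arc alphabet, and there is nothing delicate here. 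Your version would work too, but it is harder than necessary and your worry that it is the crux is misplaced. The polynomial $(\mu n)^{3L+1}$ and the $(2k-1)^{\mu n}$ arise exactly as you describe: compositions of $\le \mu n$ into $\le 3L$ nonnegative parts, and labels on the $\le \mu n$ edges.
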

\begin{proof}
Recall that an \emph{arc} in $\Gamma$ is an immersed edge-path where
every intermediate  vertex of the path has degree two in
$\Gamma$.

Note that if $\Gamma$ is a finite connected graph with fundamental
group free of rank $\le L<k$, then $\Gamma$ necessarily has a vertex
of degree $<2k$. Thus condition (4) of Definition~\ref{defn:ML} is
redundant in this case.

Let $L>k$ and $0<\mu<1$ be fixed.
Let $v\in F(a_1,\dots, a_k)$ be a $(\mu,L)$-readable word with $|v|=n$.

First, we estimate the number of labelled graphs $\Gamma$ as in
Definition~\ref{defn:ML} where $v$ can be read.

There are $\le C_0=C_0(L)$ topological types of the graphs $\Gamma$ arising
in the definition of a $(\mu,L)$-readable word.
Since $\pi_1(\Gamma)$ has rank at most $L$ and $\Gamma$ has
at most two degree-1 vertices, it follows that $\Gamma$ has $\le 3L$
non-directed maximal arcs and $\le 6L$ directed maximal arcs.

The sum of the length of theses arcs is $\le \mu n$.
The number of ways to represent a positive integer $N$ as a sum
\[
N=N_1+\dots+N_{3L}
\]
where $N_i$  are non-negative
integers is
\[
 \frac{ (N + 3L - 1)! }{ N! (3L -1)! }  \le (N+3L-1)^{3L}.
\]
 Hence the number of ways to write a sum
\[
N_1+\dots +N_{3L}\le \mu n
\]
is $\le C_1 (\mu n)^{3L+1}$, where $C_1$ is independent of $n$.
For each decomposition $N_1+\dots +N_{3L}\le \mu n$ the number of ways
to assign the maximal arcs of $\Gamma$ labels $v_1,\dots v_{3L}\in
F(a_1,\dots, a_k)$ with $|v_i|=N_i$ is
\[
\le C_2 (2k-1)^{\mu n}
\]
where $C_2>0$ does not depend on $n$.

Thus there are at most $C_0C_1C_2 (\mu n)^{3L+1} (2k-1)^{\mu n}$
relevant labelled graphs $\Gamma$ as in Definition~\ref{defn:ML}

For a fixed $\Gamma$, if $v$ can be read in $\Gamma$ then $v$ is the
label of a path \[p_1',p_2,\dots, p_{s-1},p_s'\] where $p_i$ are
oriented maximal arcs, $p_1',p_s'$ are oriented arcs and $s\le |v|=n$.
By passing to a subgraph of $\Gamma$ if necessary we may assume that
$p_1'$ and $p_2'$ are maximal arcs as well. Thus $v$ is the label of a
path $\alpha=p_1,p_2,\dots, p_{s-1},p_s$ where $p_i$ are directed
maximal arcs in $\Gamma$ and where $s\le n=|v|$. Since $s\le n$ and
$\Gamma$ has $\le 6L$ oriented maximal arcs, there are $\le (6L)^n$
combinatorial possibilities to express $\alpha$ as a word in the
alphabet of $6L$ letters corresponding to the directed maximal arcs.

Hence the total number of possibilities for $v$ is
\[
\gamma(n,\overline{\mathcal Q_k(\mu,L)})\le C_0C_1C_2 (\mu n)^{3L+1} (6L)^n (2k-1)^{\mu n},
\]
as required.

\end{proof}

The following technical definition is motivated by the corresponding
notions used in counting arguments in \cite{A1,AO}.

\begin{defn}
Let $k\ge 2$, $L\ge 2$ be integers and let $0<\mu<1$. We say that a
cyclically reduced word $w\in F(a_1,\dots, a_k)$ is
\emph{$(\mu,L)$-good} if no cyclic permutation of $w^{\pm 1}$ contains
a subword $v$ of length $\ge |w|/2$ such that $v$ is $(\mu,L)$-readable.
\end{defn}

\begin{lem}\label{lem:ML}
Let $k>L\ge 2$ and let $0<\mu<1$. Let $\mathcal Y_k=\mathcal
Y_k(\mu,L)\subseteq \mathcal C_k$ be the set of all cyclically reduced
$(\mu,L)$-good words.
Then
\[
t(\mathcal Y_k)\le \frac{((\mu+1)/2)\log(2k-1)+(1/2)\log(6L)}{\log (2k-1)}.
\]
\end{lem}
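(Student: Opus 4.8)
The plan is to bound $\gamma(n,\overline{\mathcal Y_k})$, the number of cyclically reduced words of length $n$ that fail to be $(\mu,L)$-good, and then take the appropriate $\limsup$. A word $w$ of length $n$ is \emph{not} $(\mu,L)$-good precisely when some cyclic permutation of $w^{\pm 1}$ contains a subword $v$ of length $\ell\ge n/2$ that is $(\mu,L)$-readable. So I would cover $\overline{\mathcal Y_k}$ by the following data: a choice of cyclic permutation and of $w$ versus $w^{-1}$ (at most $2n$ choices), a length $\ell$ with $n/2\le\ell\le n$ (at most $n$ choices), a starting position for $v$ inside the cyclic word (at most $n$ choices), a $(\mu,L)$-readable word $v$ of length $\ell$, and finally the remaining $n-\ell$ letters of the cyclic word to fill in the rest (at most $(2k-1)^{n-\ell}\le(2k-1)^{n/2}$ choices since $\ell\ge n/2$). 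This reconstructs $w$, so it suffices to count the pairs (readable $v$ of length $\ell$, filler of length $n-\ell$).

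The number of $(\mu,L)$-readable words of length $\ell$ is exactly $\gamma(\ell,\overline{\mathcal Q_k(\mu,L)})$, which by Proposition~\ref{prop:ML} is at most $C(\mu\ell)^{3L+1}(6L)^\ell(2k-1)^{\mu\ell}$. Since $\ell\le n$ we can bound $(\mu\ell)^{3L+1}\le(\mu n)^{3L+1}$ and, using $\ell\le n$ together with $6L\le 2k-1$ (because $L<k$ forces $6L\le 6(k-1)$... more carefully, I would simply keep $(6L)^\ell\le(6L)^n$ and $(2k-1)^{\mu\ell}\le(2k-1)^{\mu n}$, since both bases exceed $1$ and $\ell\le n$). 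Multiplying by the $(2k-1)^{n/2}$ filler factor and the polynomial-in-$n$ number of combinatorial choices ($2n\cdot n\cdot n$), I get
\[
\gamma(n,\overline{\mathcal Y_k})\le C'(\mu n)^{3L+1}\,n^3\,(6L)^n\,(2k-1)^{\mu n}\,(2k-1)^{n/2},
\]
with $C'$ independent of $n$. Taking $\log$, dividing by $n\log(2k-1)$, and letting $n\to\infty$, all polynomial factors die and I obtain
\[
t(\mathcal Y_k)\le\frac{\mu+\tfrac12}{1}+\frac{\log(6L)}{\log(2k-1)}=\frac{((\mu+1)/2)\log(2k-1)+(1/2)\log(6L)}{\log(2k-1)},
\]
which is exactly the claimed inequality.

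The only mildly delicate point, which I would state carefully rather than gloss over, is the covering argument in the first paragraph: I need the filler factor to be $(2k-1)^{n-\ell}$ rather than something larger, and I need to be sure that specifying a cyclic permutation (plus inversion), the interval occupied by $v$, the word $v$ itself, and the remaining letters genuinely determines $w$ — it does, because cyclically rotating back and possibly inverting is a bijection. I expect the main obstacle to be purely bookkeeping: making sure the exponent $n-\ell$ is bounded by $n/2$ using $\ell\ge n/2$, and that the polynomial prefactors (which depend on $k$ and $L$ but not $n$) are correctly absorbed before passing to the $\limsup$. No new idea beyond Proposition~\ref{prop:ML} is needed; this lemma is a routine ``subword of linear length'' promotion of that estimate.
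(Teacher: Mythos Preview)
Your final equality is wrong. From the bound you actually establish,
\[
\gamma(n,\overline{\mathcal Y_k})\le C'(\mu n)^{3L+1}\,n^3\,(6L)^n\,(2k-1)^{\mu n}\,(2k-1)^{n/2},
\]
dividing the logarithm by $n\log(2k-1)$ and letting $n\to\infty$ gives
\[
t(\mathcal Y_k)\le \mu+\tfrac12+\frac{\log(6L)}{\log(2k-1)},
\]
whereas the lemma claims the smaller quantity $\dfrac{\mu+1}{2}+\dfrac{\log(6L)}{2\log(2k-1)}$. These differ by $\dfrac{\mu}{2}+\dfrac{\log(6L)}{2\log(2k-1)}>0$, so the displayed equality at the end of your argument is simply false.

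The loss happens exactly where you bound the three $\ell$-dependent factors separately: you take $(6L)^\ell\le(6L)^n$ and $(2k-1)^{\mu\ell}\le(2k-1)^{\mu n}$ (both maximal at $\ell=n$) together with $(2k-1)^{n-\ell}\le(2k-1)^{n/2}$ (maximal at $\ell=n/2$). These maxima are never attained simultaneously, and replacing each factor by its individual maximum costs you an exponential factor in $n$. The paper avoids this by not summing over $\ell$: it passes to an initial segment of length exactly $n/2$ and applies Proposition~\ref{prop:ML} with $n/2$ in place of $n$, which produces $(6L)^{n/2}$ and $(2k-1)^{\mu n/2}$ directly; the remaining $n/2$ letters give $(2k-1)^{n/2}$, and the stated bound follows. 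If you wish to keep your sum over $\ell$, write
\[
(6L)^\ell(2k-1)^{\mu\ell}(2k-1)^{n-\ell}=(2k-1)^n\left(\frac{6L}{(2k-1)^{1-\mu}}\right)^{\ell},
\]
observe this is monotone in $\ell$, and evaluate at the two endpoints $\ell=n/2$ and $\ell=n$; the first gives precisely the bound in the lemma.
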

\begin{proof}
Let $w\in \overline{\mathcal Y_k}$ with $n=|w|$. There are at most
$2n$ cyclic permutations of $w^{\pm 1}$ and at least one of them has
an initial segment $v$ of length $n/2$ such that $v$ is
$(\mu,L)$-readable. Hence by Proposition~\ref{prop:ML} the number of possibilities for $w$ is
\[
\gamma(n, \overline{\mathcal Y_k})\le A (2n) (\mu n/2)^{3L+1} (6L)^{n/2} (2k-1)^{\mu n/2}(2k-1)^{n/2},
\]
where $A>0$ is independent of $n$.
Hence
\begin{gather*}
t(\mathcal Y_k)=\limsup_{n\to\infty} \frac{\log \gamma(n, \overline{\mathcal
    Y_k})}{n\log (2k-1)}\le\\
\limsup_{n\to\infty} \frac{(\frac{n}{2}+\frac{\mu n}{2})\log
(2k-1)+\frac{n}{2}\log
  6L+\log(2An)+(3L+1)\log\frac{\mu n}{2}}{n\log (2k-1)}=\\
=\frac{(\frac{\mu+1}{2})\log(2k-1)+\frac{1}{2}\log(6L)}{\log
(2k-1)}.
\end{gather*}
\end{proof}

The results of Section~4 of \cite{A1} imply:

\begin{prop}\label{prop:tech}
Let $L,k\ge 2$ be integers. Let $0<\mu<1$ and $0<\lambda<1$ be
such that
\[
0<\lambda\le \frac{\mu}{15L+3\mu}\le \frac{1}{6}.
\]
Let $G=\langle a_1,\dots, a_k| r_1,\dots, r_m\rangle$ be such that
\begin{enumerate}
\item The above presentation of $G$ satisfies the small cancellation condition $C'(\lambda)$.
\item All $r_1,\dots, r_m$ are cyclically reduced words that are not
  proper powers in $F(a_1,\dots, a_k)$.
\item Each $r_i$ is $(\mu, L)$-good.
\end{enumerate}
Then every $L$-generated subgroup of infinite index in $G$ is free.
\end{prop}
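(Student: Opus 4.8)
The plan is to argue by contradiction, reducing the statement to the counting estimate of Proposition~\ref{prop:ML}; this is essentially the subgroup analysis carried out in Section~4 of~\cite{A1}, and I would follow it closely. So suppose $H\le G$ is generated by $\le L$ elements, has infinite index in $G$, but is \emph{not} free. Lifting a generating set of $H$ to $F=F(a_1,\dots,a_k)$ gives a subgroup $\tilde H\le F$, which is free of rank $\le L$ (it is $\le L$-generated and a subgroup of a free group), together with a surjection $\tilde H\twoheadrightarrow H$ whose kernel is $N=\tilde H\cap\langle\langle r_1,\dots,r_m\rangle\rangle$; since $\tilde H$ is free and $H$ is not, we must have $N\ne 1$. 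As in~\cite{A1}, one arranges the lift (first minimizing the total length of the chosen generators, so that no generator of $H$ contains a subword that is more than half of a cyclic permutation of some $r_j^{\pm1}$ --- otherwise a Dehn replacement would shorten it) so that the Stallings core graph $\Delta$ of $\tilde H$ is a finite connected folded graph, with edges labelled over $\{a_1,\dots,a_k\}^{\pm1}$, with $\pi_1(\Delta)\cong\tilde H$ free of rank $\le L$. Since $[G:H]=\infty$ we also have $[F:\tilde H]=\infty$, so $\Delta$ is not a covering of the $k$-rose and therefore has a vertex of degree $<2k$.

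\textbf{Producing a readable sub-relator.} Pick a freely reduced word $1\ne w\in N$ that is the image of a shortest relation among the chosen generators. Then $w$ labels an immersed closed path in $\Delta$ (immersed since $\Delta$ is folded and $w$ is freely reduced), while $w=_G 1$ and $w\ne 1$ in $F$. Applying Greendlinger's Lemma to a reduced van Kampen diagram over the $C'(\lambda)$-presentation with boundary label $w$ --- using $\lambda\le 1/6$, so that $1-3\lambda\ge 1/2$, and using hypothesis (2) to exclude spherical/proper-power degeneracies --- we obtain an index $i$ and a subword $v$ of a cyclic permutation of $r_i^{\pm1}$ with $|v|\ge(1-3\lambda)|r_i|\ge|r_i|/2$ such that $v$ occurs as a subword of a cyclic permutation of $w$; in particular $v$ labels an immersed path $p$ in $\Delta$. (If the diagram has a single region one takes $v=w$, a full cyclic permutation of $r_i^{\pm1}$.) Because of the minimality arrangement of the generators, $v$ cannot lie inside a single generator-block of $w$, so $p$ necessarily spans several of the $\le L$ distinct generator-petals of $\Delta$ and hence is far from embedded.

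\textbf{The key estimate.} This is the technical heart and the place where the hypothesis $\lambda\le\frac{\mu}{15L+3\mu}\le\frac16$ is consumed; it is a quantitative refinement of the diagram/graph counting in~\cite{A1}. From $p$, together with the short complementary arc of the Greendlinger region (of length $<3\lambda|r_i|$), one builds a finite connected folded graph $\Gamma$ witnessing that $v$ is $(\mu,L)$-readable in the sense of Definition~\ref{defn:ML}: $\Gamma$ is folded, $\pi_1(\Gamma)$ is free of rank $\le L$, $\Gamma$ has at most two degree-$1$ vertices (the endpoints of $p$), $\Gamma$ has a vertex of degree $<2k$ (automatic from $\operatorname{rank}\pi_1(\Gamma)<k$ when that applies and inherited from $\Delta$ otherwise), and $v$ is the label of an immersed path in $\Gamma$. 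The crucial point is the volume bound $\operatorname{vol}(\Gamma)\le\mu|v|$: $\Gamma$ has only $\le 3L$ maximal arcs, while any maximal arc long enough to violate the bound would --- since $v$ is a subword of the \emph{single} relator $r_i$, the generators are of minimal total length, and $\Delta$ is folded --- force two coincident occurrences of a long word inside $r_i$ (or across two relators), i.e.\ a piece of $r_i$ of length $\ge\lambda|r_i|$, contradicting $C'(\lambda)$. Combining $|v|\ge|r_i|/2$, the bound of $\le 3L$ maximal arcs, and $\lambda\le\frac{\mu}{15L+3\mu}$ is exactly what closes this estimate, so $v$ is $(\mu,L)$-readable.

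\textbf{Conclusion and main obstacle.} But $v$ is a subword of a cyclic permutation of $r_i^{\pm1}$ with $|v|\ge|r_i|/2$, so its $(\mu,L)$-readability contradicts hypothesis (3) that $r_i$ is $(\mu,L)$-good. Hence $H$ is free. I expect the main obstacle to be precisely the key estimate of the third step: controlling the size of the subgraph of the Stallings graph used to read a long sub-relator, and verifying that the three-way numerical relation among $\lambda$, $\mu$ and $L$ really does push the volume below $\mu|v|$ --- this is the part that genuinely exploits the small-cancellation hypothesis and requires tracking the constants of~\cite{A1}. By contrast, the Stallings-graph setup (including the minimal-length-generators normalization), the implication $[G:H]=\infty\Rightarrow$ (low-degree vertex), the application of Greendlinger's Lemma, and the final appeal to $(\mu,L)$-goodness are all routine.
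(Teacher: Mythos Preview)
The paper itself gives no proof here; the sentence preceding the proposition reads ``The results of Section~4 of \cite{A1} imply:'' and nothing more. So the only thing to compare your outline against is Arzhantseva's own argument, which you are attempting to reproduce.

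The overall architecture --- lift to a Stallings graph, invoke Greendlinger's lemma to locate a long relator subword $v$ readable in the graph, and contradict $(\mu,L)$-goodness --- is correct, and you rightly flag the volume bound $\mathrm{vol}(\Gamma)\le\mu|v|$ as the crux. But the gap there is structural, not merely a matter of tracking constants. Dehn-reducing the generating set only guarantees that no arc of the Stallings graph $\Delta$ carries a relator subword of length exceeding $|r_j|/2$; it does \emph{not} make the arcs short relative to $\lambda|r_j|$. So the image of $p$ could a priori consist of up to $3L$ arcs each of length close to $|r_i|/2$, giving total volume far above $\mu|v|$ with no piece arising anywhere (the arc labels are subwords of generators, not of other relators, so $C'(\lambda)$ says nothing directly). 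The ingredient you are missing is the \emph{iterated} reduction of \cite{A1} (the ``AO-moves''): whenever an arc of the current graph carries more than a $(1-3\lambda)$-fraction of some relator, one replaces that arc by the short complementary piece, folds, and repeats. Each move preserves the rank $\le L$ and the image subgroup in $G$ while strictly decreasing the number of edges, so the process terminates; only in the terminal graph, where every arc is short, does the piece-counting with $\lambda\le\mu/(15L+3\mu)$ actually force $\mathrm{vol}(\Gamma)\le\mu|v|$. Your single ``short complementary arc of the Greendlinger region'' is exactly one such move, but one move does not suffice --- and, as you described it, adjoining that arc to the image of $p$ would raise the rank of $\pi_1(\Gamma)$ by one rather than preserve it.
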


\begin{thm}\label{thm:ML}
For every integer $L\ge 2$ there is some $d>0$ such that the property
of finitely presented groups for all $L$-generated subgroups of
infinite index to be free is monotone $d$-random.
\end{thm}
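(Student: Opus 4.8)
The plan is to combine Proposition~\ref{prop:tech} with the genericity-entropy machinery of Section~4 and Corollary~\ref{cor:key}(1). Fix $L\ge 2$. We may clearly assume $k>L$, since for the finitely many values $2\le k\le L$ we only need \emph{some} positive density $d(k)$, and for those $k$ the property in question contains the $C'(1/6)$ small cancellation condition (which is monotone $d$-random for $0<d<1/12$ by Proposition~10 of \cite{Olliv4}) intersected with the exponentially generic condition that no relator is a proper power; so those $k$ contribute no obstruction, and by Remark~\ref{rem:0} it suffices to produce a density sequence $d(k)$ with $\inf_k d(k)>0$.

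For $k>L$, first choose $\mu=\mu(L)\in(0,1)$ small enough that the right-hand side of the bound in Lemma~\ref{lem:ML} is strictly less than $1$ \emph{uniformly in $k\ge k_0$} for some $k_0>L$. Indeed, that bound equals $\tfrac{\mu+1}{2}+\tfrac{\log(6L)}{2\log(2k-1)}$, which for $k\to\infty$ tends to $\tfrac{\mu+1}{2}<1$; so one can pick $\mu(L)$ (say $\mu=1/4$) and then $k_0$ so that $t(\mathcal Y_k(\mu,L))\le\theta<1$ for all $k\ge k_0$, and separately absorb the finitely many $L<k<k_0$ as above. Next set $\lambda=\lambda(L):=\min\{\mu/(15L+3\mu),1/6\}>0$, which is independent of $k$. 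The relevant property $\mathcal G_k$ is ``$G$ has presentation satisfying $C'(\lambda)$, with all relators cyclically reduced, not proper powers, and $(\mu,L)$-good''; by Proposition~\ref{prop:tech} every such $G$ has all its $L$-generated infinite-index subgroups free.

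Now decompose $\mathcal G_k$ into an intersection. The condition ``all relators are $(\mu,L)$-good'' is exactly the condition that all relators lie in $\mathcal P_k:=\mathcal Y_k(\mu,L)$, which by Lemma~\ref{lem:ML} has genericity entropy $t_k:=t(\mathcal P_k)\le\theta<1$ for all $k\ge k_0$; so by Corollary~\ref{cor:key}(1), or directly by Proposition~\ref{prop:key}(1), the property ``all relators are $(\mu,L)$-good'' is monotone low-density random with density sequence $d(k)$ for any $0<d(k)<1-t_k$, and we may take $d(k)\ge 1-\theta>0$ for $k\ge k_0$. The condition ``no relator is a proper power'' is exponentially generic (hence monotone low-density random, with entropy bounded by $\tfrac{\log(2k-1)}{2}/\log(2k-1)=1/2$ say, using that proper powers of length $n$ number at most a polynomial times $(2k-1)^{n/2}$). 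The condition $C'(\lambda)$ with $\lambda$ fixed is monotone $d$-random for every $0<d<\lambda/2$, with $\lambda/2$ independent of $k$. Using the remarks after Definition~\ref{defn:lowd} that the intersection of a monotone low-density random property with a monotone $d$-random property ($d>0$ independent of $k$), and the intersection of two monotone low-density random properties, is again monotone low-density random, we conclude that $\mathcal G=(\mathcal G_k)_{k\ge 2}$ is monotone low-density random with a density sequence $d(k)$ satisfying $\inf_k d(k)\ge\min\{1-\theta,\ \lambda(L)/2,\ 1/2,\ \text{(the finitely many values for }k\le k_0\text{)}\}>0$. By Remark~\ref{rem:0}, a monotone low-density random property whose density sequence has positive infimum is monotone $d$-random in the sense of Definition~\ref{defn:random}; taking $d=\inf_k d(k)>0$ gives the theorem, since $\mathcal G_k$ implies the stated freeness property.

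The main obstacle is the uniformity in $k$: the entropy bound of Lemma~\ref{lem:ML} carries the term $\tfrac{1}{2}\log(6L)/\log(2k-1)$, which only stays below $1$ once we have committed $\mu=\mu(L)$ strictly below $1$ and then restricted to $k$ large; one must therefore be careful to (a) choose $\mu$ depending only on $L$, (b) handle the small values $k\le k_0$ by a separate (easy) argument, and (c) check that $\lambda(L)$, chosen to satisfy the hypothesis $0<\lambda\le\mu/(15L+3\mu)\le 1/6$ of Proposition~\ref{prop:tech}, does not secretly force $d$ to depend on $k$ — it does not, since $\lambda$ depends only on $L$. Everything else is bookkeeping with the closure properties of (low-)density randomness already recorded after Definition~\ref{defn:lowd}.
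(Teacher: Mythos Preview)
Your overall strategy---use Proposition~\ref{prop:tech} and bound the genericity entropy of the $(\mu,L)$-good condition via Lemma~\ref{lem:ML} for large $k$, then take an infimum over $k$---is exactly the paper's approach, and the large-$k$ part of your argument is correct.

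There is, however, a genuine gap in your treatment of the finitely many small values $k<k_0$ (including $2\le k\le L$). You assert that for those $k$ ``the property in question contains the $C'(1/6)$ small cancellation condition \dots\ intersected with the exponentially generic condition that no relator is a proper power,'' i.e., that $C'(1/6)$ together with no proper-power relators already forces every $L$-generated infinite-index subgroup to be free. This implication is false: Rips' construction produces, for any finitely presented $Q$, a $C'(1/6)$ group $G$ with a $2$-generated normal (hence infinite-index when $Q$ is infinite) subgroup $N$ with $G/N\cong Q$; for suitable $Q$ the subgroup $N$ is not even finitely presented, let alone free. Proposition~\ref{prop:tech} genuinely needs condition~(3), the $(\mu,L)$-goodness of the relators, for every $k$---you cannot drop it for small $k$.

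The paper handles small $k$ differently: it invokes Theorem~\ref{thm:alg}(4) (equivalently, Arzhantseva's results~\cite{A1,A2} via Proposition~\ref{prop:gen}(2)) to obtain, for \emph{every} $k\ge 2$, some positive density $d(k)$ for which the freeness property is monotone $d(k)$-random. Only then does it use Lemma~\ref{lem:ML} to show $d(k)\ge 1-\nu$ for $k\ge k_0$, and finally takes $d_0=\min\{d(2),\dots,d(k_0-1),1-\nu\}$. Your proof is repaired by replacing the faulty small-$k$ paragraph with this citation; note that Lemma~\ref{lem:ML} and Proposition~\ref{prop:ML} are stated only for $k>L$, so you really do need a separate source of exponential genericity for the remaining $k$, and Theorem~\ref{thm:alg}(4) supplies it.
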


\begin{proof}
Let $L\ge 2$ be a fixed integer.

It is well-known and easy to see that conditions (1) and (2) from
Proposition~\ref{prop:tech} are monotone random~(see, e.g. Proposition
10 and Theorem 11 in \emph{Olliv4}).
Thus it suffices to deal with condition (3) of
Proposition~\ref{prop:tech}.

Choose $0<\lambda,\mu<1$ so that
\[
0<\lambda\le \frac{\mu}{15L+3\mu}\le \frac{1}{6}.
\]

We have
\[
\lim_{k\to\infty} \frac{((\mu+1)/2)\log(2k-1)+(1/2)\log(6L)}{\log (2k-1)}=\frac{\mu+1}{2}<1.
\]
Choose $\nu$ so that $(\mu+1)/2<\nu<1$. There exists an integer
$k_0>L$ such that for any $k\ge k_0$
\[
\frac{((\mu+1)/2)\log(2k-1)+(1/2)\log(6L)}{\log (2k-1)}\le \nu<1.
\]
Thus by Lemma~\ref{lem:ML} for $k\ge k_0$ we have
\[
t(\mathcal Y_k)\le \nu<1.
\]

Recall that by Theorem~\ref{thm:alg} the property of having all
$L$-generated subgroups of infinite index being free is monotone
low-density random with density sequence $(d(k))_{k\ge 2}$.  Put
$d_0:=\min\{d(2),\dots, d(k_0-1), 1-\nu\}$.  Then by
Proposition~\ref{prop:key} the property of having all $L$-generated
subgroups of infinite index being free is monotone $d$-random for any
$0<d<d_0$.
\end{proof}

\begin{rem}\label{rem:H}

In \cite{A1} Arzhantseva gave a proof of exponential genericity in
$F(a_1,\dots, a_k)$ of non-$(\mu,L)$-readable
words, assuming that $\mu$ is small enough. However, the estimates on
the growth of $(\mu, L)$-readable words obtained there are
insufficient for our purposes in the proof of Theorem~\ref{thm:ML}.
Let $\mathcal P_k\subseteq \mathcal C_k$ be the set of all
non-$(\mu_k,L)$-readable cyclically reduced words in $F(a_1,\dots,
a_k)$, where $0<\mu_k<1$ satisfies
\[
0<\mu_k<\frac{1}{3L}\log_{2k}\left(1+\frac{1}{2(2k-1)^{3L}-2}  \right).
\]

A crucial estimate in Lemma~3 of \cite{A1} shows that
\[
\gamma(n,\overline{\mathcal P_k})\le A \left((2k-1)^{3L}-\frac{1}{2} \right)^{n/3L}.\tag{$\ast\ast$}
\]
This yields
\[
t(\mathcal P_k)\le \frac{\log\left( (2k-1)^{3L}-\frac{1}{2}
  \right)}{3L\log(2k-1)}\to_{k\to\infty} 1,
\]
where convergence to $1$ in the last limit is easily seen by applying
l'H\^opital's rule.  Therefore we needed an estimate  different from
$(\ast\ast)$  for the number of $(\mu,L)$-readable words in
Proposition~\ref{prop:ML}. That estimate allowed us to obtain bounds on the
genericity entropy of the set of $(\mu,L)$-good words that are
independent of $k$ for sufficiently large $k$. On the other hand, we
still needed the results of \cite{A1} obtained via the estimate $(\ast)$
to deal with the case of ``small'' $k$ with $k < k_0$ in the proof of
Theorem~\ref{thm:ML}.
\end{rem}

\section{Double-exponential lower bound for $J_\epsilon(t)$}\label{sect:mod}

In this section we establish Theorem~\ref{thm:ge} from the
Introduction (see Theorem~\ref{thm:count} below).
Note that Theorem~\ref{thm:ge} implies
Corollary~\ref{cor:er} giving Erschler's double-exponential lower bound for the
number $I_2(n)$ of isomorphism types of quotients of $F(a,b)$ by
collections of defining relations of length $\le n$.

It is well-known that the \emph{modular group} $\mathbb PSL(2,\mathbb
Z)$ is isomorphic to the free product of a cyclic group of order two
and a cyclic group of order three.
Denote
\[
M:=\langle a,b| a^2=b^3=1\rangle=\langle a| a^2=1\rangle \ast \langle b| b^3=1\rangle.
\]
Put $A=\{a,b,b^{-1}\}$. We say that a word $w\in A^\ast$ is
\emph{reduced} if it does not contain subwords of the form $aa$, $bb$,
$b^{-1}b^{-1}$, $bb^{-1}$, $b^{-1}b$. it is clear that any element of
$M$ is uniquely represented by a reduced word in $A^\ast$. We say that
a word $w\in A^\ast$ is \emph{cyclically reduced} if $w$ and all
cyclic permutations of $w$ are reduced. Thus any nonempty cyclically reduced
word is either a single letter or, up to a cyclic permutation, has the form
\[
w=ab^{\epsilon_1}ab^{\epsilon_2}\dots ab^{\epsilon_t}
\]
where $\epsilon_i=\pm 1$.
It is therefore easy to see that the number of all cyclically reduced
words in $A^\ast$ of length $n>1$ is equal to $0$ if $n$ is odd and is
equal to $2\cdot 2^{n/2}$ if $n$ is even. As before, let $\mathcal
C_A$ be the set of all cyclically reduced words in $A^\ast$. For a
subset $S\subseteq \mathcal C_A$ denote by $\gamma(n,S)$ the number of
elements of length $n$ in $\mathcal C_A$. Similarly to the free group
case, we can define the notions of \emph{generic} and
\emph{exponentially generic} subsets of $\mathcal C_A$. Thus
$S\subseteq \mathcal C_A$ is \emph{exponentially generic} if
\[
\lim_{t\to\infty} \frac{\gamma(2t,S)}{\gamma(2t,\mathcal C_A)}=\lim_{t\to\infty} \frac{\gamma(2t,S)}{2^{t+1}}=1
\]
with exponentially fast convergence.
Similarly, all the other notions of genericity in the Arzhantseva-Ol'shanskii model
can be defined for quotients of $M$ in exactly the same way as for the
quotients of $F(a_1,\dots, a_k)$.

Denote by $\eta:M\to M$ the \emph{relabelling automorphism} of $M$
defined on the generators as $\eta(a)=a$, $\eta(b)=b^{-1}$.

\begin{notation}
For $\epsilon>0$ be fixed. For an integer $t\ge 1$ let $J_\epsilon(t)$ be the number of
isomorphism types of groups given by presentations of the form
\[
G=M/\langle\langle r_1,\dots, r_m\rangle\rangle \tag{$\ddag$}
\]
where $m=2^{t\epsilon}$ and where each $r_i$ is a
cyclically reduced word of length $2t$ in $A^\ast$.
\end{notation}

\begin{thm}\label{thm:count}
There exists $\epsilon_0>0$ such that for any $0<\epsilon\le
\epsilon_0$ there is some $\rho>1$ such that
\[
J_\epsilon(t)\ge \rho^{\rho^t} \quad \text{ for } t\to\infty,
\]
that is,
$J_\epsilon(t)$ is bounded below
by a double-exponential function of $t$ as $t\to\infty$.
\end{thm}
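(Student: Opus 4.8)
The plan is to combine the isomorphism rigidity results for generic quotients of the modular group $M$ obtained in \cite{KS3} with a counting argument that produces, out of the vast number of presentations $(\ddag)$, a doubly-exponential family of pairwise non-isomorphic groups. The starting point is that the set $\mathcal C_A$ of cyclically reduced words of length $2t$ in $A^\ast$ has size $2^{t+1}$, so the number of collections $\{r_1,\dots,r_m\}$ with $m=2^{t\epsilon}$ is of the order $2^{(t+1)\cdot 2^{t\epsilon}}$, which is already doubly exponential in $t$. The issue is that many of these presentations define isomorphic groups; to get a doubly-exponential \emph{lower} bound on $J_\epsilon(t)$ we must show that the equivalence classes under isomorphism are small — at most exponential (in $t$) in size, say — so that the quotient is still doubly exponential.

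First I would invoke the genericity technology: for $\epsilon$ small enough, the small cancellation condition $C'(\lambda)$ (for a suitable fixed $\lambda\le 1/6$) together with the Arzhantseva–Ol'shanskii non-readability conditions hold exponentially generically among $m$-tuples of cyclically reduced words of length $2t$, exactly as in the free group case transferred to $M$. This is where the choice of $\epsilon_0$ comes from: one needs the density $\epsilon$ to lie below the threshold dictated by the genericity entropy of the relevant generic sets (which here, since the number of generators of $M$ is fixed, is a fixed positive number). Once we restrict to the generic part, the results of \cite{KS3} give isomorphism rigidity: two such presentations $M/\langle\langle r_1,\dots,r_m\rangle\rangle$ and $M/\langle\langle r_1',\dots,r_m'\rangle\rangle$ are isomorphic only if the defining relator sets agree up to the obvious symmetries — cyclic permutation and inversion of each $r_i$, reordering of the $r_i$, and possibly the relabelling automorphism $\eta$. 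Each such symmetry contributes only a polynomially-in-$t$ (for cyclic permutation and inversion of a single relator) or $m!$ (for reordering) factor, so the isomorphism class of a generic presentation has size at most $(4t)^m\, m!$, which is $2^{O(t\,2^{t\epsilon})}=2^{o((t+1)2^{t\epsilon})}$ once $\epsilon$ is chosen below, say, $1/2$; more carefully, $m! \le m^m = 2^{t\epsilon 2^{t\epsilon}}$, still negligible against the main count provided $\epsilon < 1$.

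Then the count is essentially a division: the number of generic $m$-element subsets of $\mathcal C_A$ (of size $2t$) is $\binom{2^{t+1}}{m}(1-o(1))$, which is at least $(2^{t+1}/m)^m = 2^{((t+1)-t\epsilon)m}=2^{(1-\epsilon+1/t)(t)2^{t\epsilon}}$, and dividing by the maximal isomorphism-class size $(4t)^m m!$ still leaves a quantity bounded below by $\rho^{\rho^t}$ for a suitable $\rho>1$, once $\epsilon$ is small enough that $(1-\epsilon)$ strictly dominates $\epsilon$ in the exponent comparison. Taking $\rho\in(1,2^{\epsilon})$ or so and absorbing the polynomial corrections gives the claimed $J_\epsilon(t)\ge \rho^{\rho^t}$ for $t\to\infty$.

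The main obstacle, and the place where real work (importing \cite{KS3}) is needed, is establishing the isomorphism rigidity statement in the precise quantitative form required: that within the exponentially-generic family, the isomorphism type of $M/\langle\langle r_1,\dots,r_m\rangle\rangle$ determines the unordered tuple $\{r_1,\dots,r_m\}$ up to cyclic conjugation, inversion, and the single relabelling automorphism $\eta$, with no further ambiguity. The small-cancellation hypothesis guarantees (via Greendlinger-type arguments) that the relators are recoverable as the "pieces that appear twice" in the Cayley complex, but upgrading this to a genuine rigidity — ruling out exotic isomorphisms that do not come from automorphisms of $M$ — is exactly the content of \cite{KS3}, and the bookkeeping that the symmetry group acting on relator-tuples has only subexponential order is the remaining routine but essential step.
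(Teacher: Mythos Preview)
Your proposal is correct and follows essentially the same approach as the paper's own proof: both invoke the isomorphism rigidity for generic quotients of $M$ from \cite{KS3} (isomorphism forces the relator tuples to agree up to reordering, cyclic permutation, inversion, and the relabelling automorphism $\eta$), observe that the resulting ambiguity has order at most $2\, m!\,(4t)^m$, and divide the total count $\sim 2^{m(t+1)}$ of generic presentations by this quantity to obtain a doubly-exponential lower bound, with the threshold $\epsilon_0$ coming from the genericity entropy exactly as you say. The paper's endgame arithmetic is slightly more explicit (it bounds $m!\le m^m$ and checks that $2^{t+1}/(16\cdot 2^{t\epsilon}t)\ge 2$ for small $\epsilon$ and large $t$, yielding $\log J_\epsilon(t)\ge 2^{t\epsilon/2}$), but your sketch arrives at the same conclusion by the same route.
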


\begin{proof}
The results of \cite{KS3} show that there is some exponentially
generic subset $S\subseteq A^\ast$ and some $0<\lambda<1$ with the following property.
Suppose $m\ge 1$ is fixed. Then there exists an exponentially generic
subset $U_m\subseteq \mathcal C_A^m$ such that:

(1) Every presentation $\ddag$ with $(r_1,\dots, r_m)\in U_m$
satisfies the $C'(\lambda)$ small cancellation condition.

(2) We have $U_m\subseteq S^m$.

(3) For $(r_1,\dots, r_m), (s_1,\dots, s_m)\in U_m$ with
$|r_i|=|s_j|=2t$ the groups
$M/\langle\langle r_1,\dots, r_m\rangle\rangle$ and  $M/\langle\langle
s_1,\dots, s_m\rangle\rangle$ are isomorphic if and only if there is a
reordering $(r_1',\dots, r_m')$ of $(r_1,\dots, r_m)$ and there is
$\delta\in \{0,1\}$ such that each $r_i'$ is a cyclic permutation of
$\eta^{\delta}(s_i)$ or of $\eta^{\delta}(s_i^{-1})$.

(4) The number $K_m(t)$ of isomorphism types of groups given by
presentation $(\ddag)$ where all $r_i$ are cyclically reduced words of
length $2t$ in $A^\ast$ satisfies
\[
K_m(t)\sim \frac{2^{m(t+1)}}{2 \, m! (4t)^m}
\]

Statement (4) is essentially a corollary of (3): one needs to count
the number of all presentations $(\ddag)$ where $(r_1,\dots, r_m)\in
U_m$ has $|r_1|=\dots=|r_m|=2t$ and
divide this number by the multiplicity constant in counting the isomorphism
types of such presentations, where this multiplicity constant comes
from (3) and is equal to $2 \, m! (4t)^m$. 
Here the factor $m!$ comes from counting reorderings $(r_1',\dots, r_m')$ of $(r_1,\dots, r_m)\in U_m$. 
Every $r_i$ of length $2t$ has $2t$ cyclic permutations, so there are $4t$ cyclic permutations of $r_i^{\pm 1}$.
Finally, applying $\eta^\delta$, with $\delta=0,1$, to the presentation gives an additional multiplicity factor of $2$.  

The results of the present paper, namely an appropriately adapted version of Corollary~\ref{cor:key}, imply that
statements (1)-(3) also hold in the low-density model, where the
number of relations $m$ is
not fixed but rather has the form $m=2^{t\epsilon}$,
where $\epsilon>0$ is a sufficiently small number independent of $t$.
Note that since $S\subseteq \mathcal C_A$ is exponentially generic, we
have $\gamma(2t,S)\ge \frac{1}{2}\gamma(2t,\mathcal C_A)$ for all
sufficiently large $t$. 
Then the same arguments as in \cite{KS3} imply that the number
$J_\epsilon(t)$ satisfies
\[
J_\epsilon(t)\ge C\frac{2^{m(t+1)}2^{-m}}{2 \, m! (4t)^m}
\]
where $C>0$ is a constant and where $m=2^{t\epsilon}$.
It is not hard to see that this gives a double-exponential lower bound
for $J_\epsilon(t)$. Indeed, note that $m!\le m^m$ and thus
\[
J_\epsilon(t)\ge C\frac{(2^{(t+1)})^m}{2 (8mt)^m}\ge C\frac{(2^{(t+1)})^m}{(16mt)^m}
\]
hence
\[
\log J_\epsilon(t) \ge \log C+ m\, \log\left( \frac{2^{(t+1)}}{16mt}
\right)=\log C+ 2^{t\epsilon}\, \log\left( \frac{2^{(t+1)}}{16\cdot 2^{t\epsilon}t} \right)
\]
If $\epsilon>0$ is chosen sufficiently small, then
\[
\frac{2^{(t+1)}}{16\cdot 2^{t\epsilon}t}\ge 2 \quad\text{ for } t\to\infty
\]
and hence
\[
\log J_\epsilon(t) \ge \log C+ 2^{t\epsilon}\, \log 2\ge  2^{t\epsilon/2} \quad\text{ for } t\to\infty,
\]
yielding a double-exponential lower bound for $J_\epsilon(t)$, as required.
\end{proof}

\end{document}